\newtheorem{thm}{Theorem} 
\newtheorem{lem}{Lemma}
\newcommand{\matr}[1]{\begin{bmatrix} #1 \end{bmatrix}}
\newcommand{\R}{\mathbb{R}}
\newcommand{\Rm}[2]{\mathbb{R}^{#1 \times #2}}
\title{Sharp error bounds for approximate 
eigenvalues 
and singular values from subspace methods \thanks{Submitted to the editors on June 1st, 2025.}} 
\author{Irina-Beatrice Haas\footnotemark[2]
\and Yuji Nakatsukasa\thanks{Mathematical Institute, University of Oxford, OX2 6GG, UK (\email{haas@maths.ox.ac.uk}, \email{nakatsukasa@maths.ox.ac.uk}).} }
\begin{document}

\maketitle

\begin{abstract}
Subspace methods are commonly used for finding approximate eigenvalues and singular values of large-scale matrices. 
Once a subspace is found, the Rayleigh-Ritz method (for symmetric eigenvalue problems) and Petrov-Galerkin projection (for singular values) are the de facto method for extraction of eigenvalues and singular values. 
In this work we derive quadratic error bounds for approximate eigenvalues of symmetric matrices obtained via the Rayleigh-Ritz process. Our bounds take advantage of the fact that extremal eigenpairs tend to converge faster than the rest, hence having smaller residuals $\|A\widehat x_i-\theta_i\widehat x_i\|_2$, where $(\theta_i,\widehat x_i)$ is a Ritz pair (approximate eigenpair). 
The proof uses the structure of the perturbation matrix underlying the Rayleigh-Ritz method to bound the components of its eigenvectors. In this way, we obtain a bound of the form 
$c\frac{\|A\widehat x_i-\theta_i\widehat x_i\|_2^2}{\mbox{Gap}_i}$, where $\mbox{Gap}_i$ is roughly the gap between the $i$th Ritz value and the eigenvalues that are not approximated by the Ritz process, and $c> 1$ is a modest scalar. 
Our bound is adapted to each Ritz value and is robust to clustered Ritz values, which is a key improvement over existing results. We further show that the bound is asymptotically sharp, and generalize it to singular values of arbitrary real matrices.
Finally, we apply these bounds to several methods for computing eigenvalues and singular values, and illustrate the sharpness of our bounds in  a number of computational settings, including Krylov methods and randomized algorithms.
\end{abstract}

\begin{keywords}
Matrix perturbation theory, Ritz values, Rayleigh-Ritz process, singular values, Petrov-Galerkin process 
\end{keywords}

\begin{MSCcodes}
65F15, 15A18, 15A42, 68W20
\end{MSCcodes}

\section{Introduction} \label{sec:intro}

The symmetric eigenvalue problem and the Singular value decomposition (SVD) are key computational tasks in many numerical methods for engineering and data analysis applications. Classical algorithms perform these decompositions in polynomial time (cubic in the dimension, for an $\epsilon$-accurate solution), but the matrices that are considered nowadays are often of very large scale, sometimes even too large to fit into memory. 
Subspace methods (including Krylov subspace methods \cite{Parlett98} and randomized algorithms \cite{HMT}) form a leading class of methods that allow us to tackle such problems. 
Once a subspace $\mbox{Span}(Q)$ is identified, where $Q\in\mathbb{C}^{n\times k}$ is a matrix with orthonormal columns, 
the most common 
approach to extracting eigenvalues is the Rayleigh-Ritz (RR) process \cite[Ch.~11]{Parlett98}, which outputs the eigenvalues of $Q^TAQ$ as approximations to those of $A$. 
The quality of the Ritz eigenpairs as an approximation to the actual eigenpairs of the original matrix has been extensively studied in the literature: for the eigenvectors, a bound on the canonical (or \textit{principal}) angle between the subspace spanned by the Ritz vectors and the actual eigenspace is provided in the classical Davis-Kahan sin $\theta$ theorem \cite{davisKahan}\cite[Chap.11]{Parlett98} and was improved in \cite{Yuji_sharpVect}. For the Ritz values, error bounds have been derived in \cite{LiLi,Jia2001,Parlett98}\cite[Cor.7.3.5]{Horn2012}\cite[Cor.1.4.31]{Stewart1998}. 
A similar process is applicable to approximate the (usually leading) singular values of $A\in\mathbb{C}^{m\times n}$ from subspace(s) spanned by $Q_1\in\mathbb{C}^{m\times k_1},Q_2\in\mathbb{C}^{n\times k_2}$, as the singular values of the projected matrix $Q_1^TAQ_2$; this is called a \emph{Petrov-Galerkin (PG) projection} method. 
More recently, randomized algorithms such as randomized SVD by Halko, Martinson and Tropp (HMT) \cite{HMT} and (Generalized) Nyström \cite{GNystrYuji,clarkson2009,woolfe2008} have been receiving significant attention, as they provide an efficient way of computing a low-rank approximation of the original matrix  that is nearly optimal in the Frobenius norm \cite{HMT}.  Similarly to RR and PG, randomized SVD is also based on (one-sided) projection, $A\approx QQ^TA$,
where $\mbox{Span}(Q)=\mbox{Span}(A\Omega)$ for a random (sketch) matrix $\Omega$. 
Error bounds on the resulting 
singular values for these methods are  derived in \cite{gu2015subspace,lazzarino,saibaba2019randomized}. 

In this paper we first examine the accuracy of the output approximate eigenvalues from the RR process. 
Our work is primarily motivated by the fact that existing error bounds for the Ritz eigenvalues can be improved greatly, assuming that the norms of the residuals corresponding to each eigenvalue differ significantly, which is common in practice. 
That is why we derive sharp upper bounds on the difference between the Ritz values and the exact eigenvalues by exploiting the perturbation structure underlying the Rayleigh-Ritz process. 
We then generalize the result for singular values of arbitrary matrices, 
and compare the bound to the approximate singular values obtained from HMT in our numerical experiments, along with other approximation methods including subspace iteration and Krylov methods. 

To make the problem precise, let us  fix the notation and consider the structure afforded by the RR and PG processes.

\textbf{Notation :} We use MATLAB notation for matrix indexing, in which $X(:,i)$ denotes the $i$th column of $X$ and $X(:,i:j)$ is the matrix consisting of the $i$th to the $j$th columns of $X$. If not stated otherwise, we also note $X_i$ the $i$th column of the matrix $X$.  
We let $\|X\|_2$ denote the spectral norm of a matrix $X$, which reduces to the Euclidean norm when $X$ is a vector.
For simplicity we focus on the real symmetric case (for the eigenvalue problem). 
Our analysis holds more generally verbatim for complex Hermitian matrices; in this case, one  replaces $^T$ with the Hermitian transpose $^*$ in what follows.
Finally, the error bounds presented below often involve spectral gaps that we denote $\mbox{Gap}_i, \mbox{gap}_i$ or $\Gamma_i$.  By definition $\mbox{Gap}_i$ and $ \Gamma_i$ will involve the part of the spectrum that is not approximated and the eigenvalue whose error is bounded, while $\mbox{gap}_i$ will involve (roughly) the approximated eigenvalue and the next closest  exact eigenvalue. Therefore 
typically $\mbox{Gap}_i>\mbox{gap}_i$, and we emphasize the case where $\mbox{Gap}_i\gg \mbox{gap}_i$. 
We also use the letter $\delta$ to denote available lower bounds on the spectral gaps.

\paragraph{Perturbed matrix and available information from the Rayleigh-Ritz process}

Suppose that $k$ eigenpairs of an $n\times n$ symmetric matrix $A$ are sought (typically  the smallest or largest eigenpairs), and we have a $k$-dimensional\footnote{In practice one often uses a subspace whose dimension is larger than the desired number of eigenpairs $k$, but this does not affect what follows. }
trial subspace spanned by an $n\times k$ matrix $Q_1$ with orthonormal columns that approximates the corresponding eigenspace. 
To extract an approximation to the desired eigenpairs from $Q_1$  with RR one computes the $k\times k$ symmetric eigendecomposition $Q_1^{T}AQ_1=Y\widehat\Lambda Y^T$
where $\widehat \Lambda=\mbox{diag}(\theta_1,\ldots,\theta_k)$ is the matrix of Ritz values and $Y$ is unitary (see e.g., \cite[Ch.~11]{Parlett98}). 
The matrix of Ritz vectors $\widehat X=[\widehat x_1,\ldots,\widehat x_k]$ is defined as $\widehat X=Q_1Y$. Then let $\widehat X_\perp$ be such that $[\widehat X\ \widehat X_\perp]$ is a unitary matrix, and form the \textit{perturbation matrix}
\begin{equation}  \label{eq:RRstruct} 
\bar{A} =[\widehat X\ \widehat X_\perp]^{T}A[\widehat X\ \widehat X_\perp]=
\begin{bmatrix}
\widehat\Lambda& E^{T}\\ E&A_2
\end{bmatrix} 
\end{equation}
where $E=\widehat X_\perp^{T}A\widehat X=[E_1,E_2,\ldots,E_k]$. The matrix $\bar A$ is similar to $A$ so it has the same eigenvalues as $A$.
Throughout the paper we let $\lambda_i$ (resp. $\sigma_i$) denote the $i$th eigenvalue (resp. singular value) of the matrix $A$. 
The overarching goal is to bound $|\theta_i-\lambda_i|$, the error in the Ritz values as approximations to the exact eigenvalues $\lambda_i$.

In practice, the information available after RR are the Ritz pairs $(\theta_i,\widehat x_i)$ for $i=1,\ldots, k$ and the norms of the columns of $E$, because they are equal to the norm of the residual: $\|E_i\|_2=\|A\widehat x_i-\theta_i \widehat x_i\|_2$. 
In addition, since $Q_1$ is designed to approximate a $k$-dimensional eigenspace, a rough estimate of the eigenvalues of $A_2$ is usually available. For example, when $Q_1$ approximates the smallest eigenspace we can reasonably expect $\lambda(A_2)\gtrsim \max_i\theta_i$ (this must hold in the limit $E\rightarrow 0$), where $\lambda(A_2)$ denotes the spectrum of $A_2$. 
Another option is to use oversampling, i.e. to approximate $k+\ell$ eigenvalues (or singular values) so that the $\ell$ additional Ritz values provide a good approximation of $\mbox{Gap}_i :=\min_j |\theta_i-\lambda_j(A_2)|\approx \min_{k<j\leq k+\ell} |\theta_i-\theta_j|$. 
More generally to obtain an error bound on $\theta_i$ we only need to assume the knowledge of a lower bound $\widehat \delta_i$ on $\mbox{Gap}_i$.

A key observation here is that 
the residuals $\|E_i\|_2$ typically vary with $i$ by several orders of magnitude. For example, if the smallest $k$ eigenpairs are sought and the Ritz values $\theta_i$ are arranged in increasing order, then we typically have 
a \emph{graded} structure 
$\|E_1\|_2\lesssim \|E_2\|_2\lesssim\ldots \lesssim \|E_k\|_2$, with $\|E_1\|_2\ll \|E_k\|_2$, because extremal eigenvalues tend to converge faster than interior ones by widely used methods such as Krylov subspace methods (like Lanczos) and LOBPCG \cite{Bai2000,LOBPCG_paper} (and HMT for singular values); this is related to the convergence of the power method. In this situation we observe that the error $|\theta_i-\lambda_i|$ is $O(\|E_i\|^2_2)$. Our goal is to derive bounds that accurately reflect this for each eigenpair.

Turning to the general case, to approximate $k$ singular values of a general matrix $A\in\mathbb{R}^{m \times n}$, 
using the PG method, assume we have a left trial subspace spanned by 
$Q_1\in \R^{m\times k}$ 
and a right trial subspace spanned by $Q_2 \in \R^{n\times k_2}$, with $Q_1, Q_2$ orthogonal,
and $k_2\geq k$ without loss of generality.  Take the SVD decomposition of $Q_1^T A Q_2= X \widehat{\Sigma} Y^T$ with $\widehat{\Sigma}=\mbox{diag}(\theta_1, \theta_2, \ldots, \theta_k)$ containing the approximate singular values, 
then multiply $A$ by the appropriate orthogonal matrices on both sides as follows to obtain
\begin{equation}\label{eq:RR_svd_structure}
    \bar{A} = \matr{X^T &  \\  & I_{m-r}}\matr{Q_1^T \\ Q_{1\perp}^T} A \matr{Q_2 & Q_{2\perp}} \matr{Y &  \\  & I_{n-r}} = \matr{\widehat{\Sigma} & E^T \\ F & A_2}.
\end{equation}
Note that when $k_2>k$, the first $k_2-k$ columns of $E^T$ are zero; indeed $k_2=n$ is allowed (as will be in HMT as we will see), in which case $E=0$. 
The perturbation matrix $\bar{A}$ is equivalent to $A$ up to orthogonal multiplication, so it has the same singular values as $A$.  Similarly to the symmetric case, we assume that the available information is the residual norms $\|E_i\|_2= \|A^T\hat u_i - \theta_i\hat v_i\|_2$ and $\|F_i\|_2= \|A\hat v_i - \theta_i\hat u_i\|_2$, where $F_i$ and $E_i$ are the columns of $E$ and $F$ respectively, and some gap information on the approximate singular values $\theta_i$ and the singular values of $A_2$.

Below we review existing results that use 
similar information, namely the residuals and some (approximate) gap information, to derive error bounds for singular values or eigenvalues.

\paragraph{Literature review}
Fundamentally, the problem set out in \Cref{eq:RRstruct} is that of eigenvalue perturbation: how do the eigenvalues of $\begin{bmatrix}
\widehat\Lambda& \\ &A_2\end{bmatrix}$ change by the off-diagonal perturbation $E$? A number of results are available in this direction, which we review here. A similar problem was considered in~\cite{LiAndYuji}, which  investigates the special case of a multiple Ritz value, ie. when $\widehat \Lambda= \mu I_k$, but the problem considered in this paper is for a general set of Ritz values. 

The simplest and most general (i.e. making no use of the perturbation structure) symmetric eigenvalue or singular value perturbation bound is provided by Weyl’s theorem \cite[Cor.7.3.5]{Horn2012}\cite[Cor.I.4.31]{Stewart1998}, which states that
\begin{equation}
    |\theta_i-\lambda_i| \leq \|E_i\|_2,
\end{equation}
in the symmetric case, and in the general case
\begin{equation}
    |\theta_i-\sigma_i| \leq \max\{\|E_i\|_2, \|F_i\|_2\}.
\end{equation}
However this only gives a linear bound (with respect to the residual $\|E_i\|_2$) although usually in practice the accuracy of $\theta_i$ is much higher.  An extensive line of work such as \cite{RQbounds} and the references therein offer improved linear bounds.
Other authors  \cite{LiLi,lazzarino,Nakatsukasa2012} have derived quadratic bounds that depend on the spectral norm of the error blocks from the perturbation matrix. 
Improving the bound from Mathias~\cite{Mathias1998}, C.-K. Li and R.-C. Li \cite{LiLi} derive an error bound for the case where the perturbation matrix is as in \Cref{eq:RRstruct,eq:RR_svd_structure}. 
For symmetric matrices, using again $\mbox{Gap}_i=\min_j|\theta_i-\lambda_j(A_2)|$ as above, they prove the bound: 
\begin{equation}\label{eq:liliSymbd}
    |\lambda_i-\theta_i| \leq \dfrac{2\|E\|_2^2}{\mbox{Gap}_i+ \sqrt{\mbox{Gap}_i^2+4 \|E\|_2^2} }
\end{equation}
and similarly, for singular values of general matrices: 
\begin{equation}\label{eq:liliSVDbd}
    |\sigma_i-\theta_i| \leq \dfrac{2\max \{\|E\|_2, \|F\|_2\}^2}{\mbox{Gap}_i+ \sqrt{\mbox{Gap}_i^2+4 \max \{\|E\|_2, \|F\|_2\}^2} },
\end{equation}
with $\mbox{Gap}_i=\min_j|\theta_i-\sigma_j(A_2)|$.

Alternatively, \cite{Nakatsukasa2012, lazzarino} give more general bounds since their perturbation matrix also potentially has nonzero components in the diagonal blocks. In \cite{lazzarino} the authors apply such bound to the structure afforded from Generalized Nyström, HMT and RR and compare the accuracy of these methods for singular value estimation. With our notation and considering only off diagonal perturbations, the bound from \cite[Theorem 4.1]{lazzarino} is :
\begin{equation}\label{eq:lorenzoSVDbd}
    |\sigma_i-\theta_i| \leq \dfrac{2\max \{\|E\|_2, \|F\|_2\}^2}{\min_k |\sigma_i-\sigma_k(A_2)|-2\max \{\|E\|_2, \|F\|_2\}}.
\end{equation}
\Cref{eq:liliSymbd,eq:liliSVDbd,eq:lorenzoSVDbd} are relevant in cases  where the individual residuals are not available, and \cite{LiLi} showed that their bound is asymptotically sharp as $\|E\|_2\longrightarrow 0$ when $\mbox{Gap}_i$ and $\|E\|_2$ are the only available information. 

Recalling from above that we expect $|\theta_i-\lambda_i|$ to be $O(\|E_i\|^2_2)$, 
the above bounds 
clearly overestimate the error for the extremal Ritz values.

Other existing quadratic bounds on the accuracy of Ritz values  that make use of the individual residuals include the bounds in \cite{Jia2001}, which uses the angle between $Q_1$ and an exact eigenspace, and the classical result 
\begin{equation}  \label{eq:classicalbd}
|\theta_i-\lambda_i|\leq \frac{\|E_i\|_2^2}{\widetilde{\mbox{gap}}_i}
\end{equation}
from \cite[Thm.~11.7.1]{Parlett98}, where $\widetilde{\mbox{gap}}_i$ is the gap between $\theta_i$ and the eigenvalues of $A$ excluding the one closest to $\theta_i$.  When the 
Ritz values are well separated, the bound \Cref{eq:classicalbd} is much sharper than the previous ones, especially assuming we have graded residuals, but this bound is not usable for clustered singular values and, similarly to \cite{Yuji_sharpVect}, we will show in this paper that the denominator in \Cref{eq:classicalbd} can be made larger. 

One can also use \Cref{eq:liliSymbd} from \cite{LiLi} (or alternatively the bound from \cite{Nakatsukasa2012}) with a $1$-by-$1$ block partitioning as follows: the perturbation matrix $\bar A$ can be rearranged as
\begin{equation} \label{eq:11block}
    \begin{bmatrix}
    \theta_i & & E_i^T \\
     & \widehat\Lambda_{\backslash i} & E_{\backslash i}^T \\
      E_i &  E_{\backslash i} & A_2
\end{bmatrix} =: \begin{bNiceArray}{c|cc}[margin]
\theta_i & 0_{1\times (k-1) }  & E_i^T \\
\hline
0_{(k-1)\times 1}  & \Block{2-2}{B} & \\
E_i & & 
\end{bNiceArray},
\end{equation}
where $\backslash i$ indicates that we removed only the elements corresponding to the $i$-th Ritz value from the blocks $E$ and $\widehat \Lambda$. This
 gives the bound
 \begin{equation}\label{eq:lili_11block}
 |\theta_i-\lambda_i| \leq  \dfrac{\|E_i\|^2_2}{\mbox{gap}_i +\sqrt{\mbox{gap}_i^2+ 4\|E_i\|_2^2}} 
 \end{equation}
 where the denominator $\mbox{gap}_i$  is the gap between  $\theta_i$ and the eigenvalues of the $(n-1)\times (n-1)$ matrix $B$ defined above. The drawback of this approach is that it puts the remaining Ritz values in the second diagonal block, and thus it reduces $\mbox{Gap}_i$ in \Cref{eq:liliSymbd,eq:liliSVDbd} to approximately the same denominator as in \Cref{eq:classicalbd}. Hence, another way of looking at the contribution made in this paper is that we improve the spectral gap in the error bounds of Ritz values from (roughly) $\mbox{gap}_i= \min_{j\neq i}|\theta_i-\lambda_j(A)|$ to $\mbox{Gap}_i$, while keeping the denominator as small as $\|E_i\|_2^2$.

\paragraph{Key contributions}
Motivated by the above discussion, in this paper we derive bounds of the form
\begin{equation} \label{eq:goal} 
|\theta_i-\lambda_i|\leq
  c\frac{\|E_i\|^2_2}{\mbox{Gap}_i}, 
\end{equation}
where $c$ is a modest constant such that $ c\rightarrow 1$ as $E\rightarrow 0$. Our proposed bounds are particularly good in  the case where the extremal eigenvalues are sought and the residuals are graded (but these conditions are not required for the bounds to be applicable). In this context, we show that our bounds are significantly sharper than those in the literature, including \cite{LiLi,Parlett98}, accurately reflecting the error observed in practice. Moreover, contrarily to \Cref{eq:classicalbd}, our bound \Cref{eq:goal} is also suitable for clustered eigenvalues, as $\mbox{Gap}_i$ is much larger than the denominator $\widetilde{\mbox{gap}}_i$ in \Cref{eq:classicalbd}. 

Since the constant $c$ above converges to $1$ as $E\rightarrow 0$,  we have 
\[\lim_{E\rightarrow 0}\frac{|\theta_i-\lambda_i|}{\|E_i\|^2_2} \leq \frac{1}{\min_j|\theta_i-\lambda_j(A_2)|}\leq \frac{1}{\widehat\delta_i},\]
and we show that this asymptotic bound is sharp, that is, it cannot be improved without more information about the eigenvalues of $A_2$.

The main message of this work is that when $E$ is sufficiently small, the error in a Ritz value $\theta_i$ is bounded by the square of the corresponding residual divided by the gap between $\theta_i$ and the eigenvalues of $A_2$, which are roughly the eigenvalues that are not sought; they converge when $E\rightarrow 0$. A practical implication is that in large-scale symmetric eigenvalue problems, working with a subspace of dimension larger than $k$ (which is often done for the purpose of avoiding missing eigenvalues) helps improve the accuracy of the desired ones, because doing so increases the denominator in the error bound \Cref{eq:goal}, which implies that a larger tolerance becomes acceptable for the residual  $\|E_i\|_2=\|A\widehat x_i-\theta_i \widehat x_i\|_2$ to ensure a given required accuracy of $\theta_i$.

The paper is organized as follows: in \Cref{sec:prelim} we describe the method that we use to obtain our theoretical error bounds for the eigenvalues of symmetric matrices in \Cref{sec:sym_bd} and for singular values of arbitrary matrices in \Cref{sec:svd_bd}. We then illustrate our theoretical bounds with numerical experiments in  \Cref{sec:experiments} and apply them for estimates obtained via RR, HMT, PG and Lanczos.


\section{Basic approach}\label{sec:prelim}
In this section we explain the main ideas that we use to derive the theorems in \Cref{sec:sym_bd}. 
We first recall a well-known result on the derivative of simple eigenvalues \cite{Stewart90}.
\begin{lem}\label{lem0}
Let $A_0$ and $F$ be symmetric matrices. 
Denote by $\lambda_i(t)$ the $i$th eigenvalue of $A_0+tF$ such that
 $(A_0+tF)x(t)=\lambda_i(t)x(t)$ where $\|x(t)\|_2=1$ for $t\in [0,1]$. If $\lambda_i(t)$ is simple, then 
\begin{equation}\label{pert}
\frac{d\lambda_i(t)}{d t}= x(t)^\ast F x(t).
\end{equation}
\end{lem}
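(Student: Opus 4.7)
The plan is to differentiate the eigenvalue relation directly with respect to $t$, then exploit the symmetry of $A_0+tF$ together with the normalization of $x(t)$ to collapse the terms involving the derivative of the eigenvector. Before computing, I would first invoke a standard fact (e.g. Rellich's theorem, or the analytic implicit function theorem applied to the characteristic polynomial): because $\lambda_i(t)$ is assumed simple on a neighborhood of the point of interest, both $\lambda_i(t)$ and a corresponding unit eigenvector $x(t)$ can be chosen to depend smoothly on $t$. This legitimates what follows.

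With differentiability in hand, I would differentiate both sides of $(A_0+tF)x(t)=\lambda_i(t)x(t)$ to obtain
\begin{equation*}
F\, x(t) + (A_0+tF)\, x'(t) = \lambda_i'(t)\, x(t) + \lambda_i(t)\, x'(t),
\end{equation*}
and then left-multiply by $x(t)^{\ast}$. Since $A_0+tF$ is symmetric, $x(t)^{\ast}(A_0+tF) = \lambda_i(t)\, x(t)^{\ast}$, so the term $x(t)^{\ast}(A_0+tF)x'(t)$ equals $\lambda_i(t)\, x(t)^{\ast} x'(t)$ and cancels the analogous term on the right-hand side. Using $x(t)^{\ast} x(t)=1$ then leaves exactly $x(t)^{\ast} F\, x(t) = \lambda_i'(t)$, which is the claimed identity.

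The only substantive step is the justification of smooth dependence of $\lambda_i$ and $x$ on $t$; once this is granted, the algebraic manipulation is a one-line consequence of symmetry and normalization. An alternative, if I wanted to avoid appealing to Rellich, would be to note that $\lambda_i(t)$ is a simple root of the characteristic polynomial $\det(\lambda I - A_0 - tF)$, so the implicit function theorem supplies a smooth branch $\lambda_i(t)$, and the corresponding unit eigenvector is then a smooth function of $t$ obtained from the one-dimensional kernel of $\lambda_i(t)I - A_0 - tF$ (up to a unimodular phase, which is irrelevant since only the quadratic form $x^{\ast}Fx$ appears).
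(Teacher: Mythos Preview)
Your argument is correct and is the standard derivation of the Hadamard first-variation formula: differentiate the eigenvalue equation, left-multiply by $x(t)^\ast$, and use symmetry together with $\|x(t)\|_2=1$ to cancel the terms involving $x'(t)$. Your care in invoking Rellich's theorem (or the implicit function theorem) to justify smooth dependence of $\lambda_i(t)$ and $x(t)$ on $t$ is appropriate and often glossed over.

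The paper does not actually prove this lemma; it simply states it as a well-known fact and cites \cite{Stewart90}. So there is no ``paper's own proof'' to compare against---your write-up supplies exactly the standard argument that the cited reference would contain.
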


In our analysis below we will have 
$F$ of the form 
$F=\big[\begin{smallmatrix}0& F_i^T\\F_i& 0  \end{smallmatrix}\big]$, where 
$F_i=[0_{(n-k)\times(i-1)}\ E_i \ 0_{(n-k)\times(n-i)}]$ has just one ($i$th) nonzero column. 
Then with the partitioning $x(t)=\begin{bmatrix}x_{1:k}(t)\\y(t)\end{bmatrix}$ (where for a vector $x$, we denote by $x_j$ its $j$th entry and by $x_{j:\ell}$ with $\ell\geq j$ the vector $[x_j,x_{j+1},\ldots,x_\ell]^T$), 
if $\lambda_i(t)$ is simple for $0\leq t\leq 1$ then 
by the above lemma we get 
\begin{align}
|\lambda_i(0) -\lambda_i(1)| &
=\left|\int_0^1 x(t)^T F x(t)dt\right|  \label{smally}\\
&\leq 2\left|\int_0^1 y(t)^T F_i x_{1:k}(t)dt\right| \nonumber\\
&= 2\left|\int_0^1 y(t)^T E_i x_i(t)dt\right| \nonumber\\
&\leq 2\|E_i\|_2\left|\int_0^1 \|y(t)\|_2dt\right|.\qquad (\|x_i(t)\|_2\leq 1)\label{smally2}
\end{align}
Note that in the setting of~\eqref{eq:RRstruct}, we have 
 $\lambda_i(0)=\theta_i$ and $\lambda_i(1)$ is an eigenvalue of $A$, so \eqref{smally2} provides an error bound for the 
Ritz value $\theta_i$. 
The key observation here 
 is that \eqref{smally2} is small  if $\|y(t)\|_2$ is small for all $0\leq t\leq 1$. In view of this, our approach below is to obtain sharp bounds for 
$\|y(t)\|_2$, from which we get sharp bounds for $|\lambda_i(0) -\lambda_i(1)|$ by \eqref{smally2}.

The idea of obtaining eigenvalue perturbation bounds via bounding eigenvector components was introduced in \cite{Nakatsukasa2012}. Moreover, it is shown there that \eqref{smally2} holds even in the presence of multiple eigenvalues, in which case $x(t)$ can be taken as any of the (many possible) eigenvectors associated with $\lambda_i(t)$, and the bounds hold as long as the bound on $\|y(t)\|_2$ holds for any eigenvector corresponding to the eigenvalue, which is the case in the forthcoming analysis. 
Hence in what follows we are not concerned with whether $\lambda_i(t)$ is simple or not.


\section{Error bounds for Ritz values} \label{sec:sym_bd}
We are now ready to derive error bounds on the Ritz values of symmetric matrices. 
First we derive an error bound for a single Ritz value that depends on the corresponding residual norm $\|E_i\|_2$, then we use the same approach to derive a bound for a set of clustered eigenvalues based on the spectral norm of the corresponding submatrix of $E$. 

\subsection{Error bound for well separated Ritz values} 
In this subsection we prove the following theorem, which is the main result of this paper.
\begin{thm}\label{thm:main}
Let 
\[A=\begin{bmatrix}
\widehat\Lambda& E^{\ast}\\ E&A_2
\end{bmatrix}\]
be a symmetric matrix where $\widehat \Lambda=\mbox{diag}(\theta_1,\ldots,\theta_k)$ 
and $E=[E_1,E_2,\ldots, E_k]$. 
For $i=1,2,\ldots, k$, suppose that 
\begin{align}
\delta_i&=\min_j|\theta_i-\lambda_j(A_2)|-\|E_i\|_2
\ (={\rm Gap}_i-\|E_i\|_2)
>0, \label{del1}\\
\delta_{i,j}&=|\theta_i-\theta_j|-\|E_i\|_2>0,  \quad \mbox{for}\quad 1\leq j\leq k,\  j\neq i \label{del2}
\end{align}
 and further that 
 \begin{equation}   \label{eq:cicondition}
d_i=\frac{1}{\delta_i-\mathlarger{\sum}_{1\leq j\leq k\atop j\neq i}\frac{\|E_j\|_2^2}{\delta_{i,j}}}>0.   
 \end{equation}
Then there exists an eigenvalue $\lambda_i$ of $A$ satisfying
\begin{equation}  \label{eq:mainbound}
|\lambda_i -\theta_i|\leq  d_i\|E_i\|_2^2. 
\end{equation}
The Ritz values $\theta_i$ and eigenvalues $\lambda_i$ have a one-to-one correspondence. 
Moreover, let $x(t), \lambda_i(t)$ be continuous functions of $t\in [0,1]$ defined by
\[
\begin{bNiceArray}{ccc|c}[margin]
\widehat \Lambda_{(1:i-1)} & & & E_{1:i-1}^T \\
& \theta_i &  & tE_i^T \\
& & \widehat \Lambda_{(i+1:k)} & E_{i+1:k}^T \\
\hline
E_{1:i-1} & tE_i & E_{i+1:k} & A_2 \\
\end{bNiceArray}x(t) = \lambda_i(t) x(t)  
\]
and $\lambda_i(0)=\theta_i$, where we note $\widehat\Lambda_{(a:b)}=\widehat \Lambda(a:b,a:b)$. Then the bound~\eqref{eq:mainbound} holds for any $\delta_i,\delta_{i,j}$ such that 
$0<\delta_i\leq \min_{j,0\leq t\leq 1}|\lambda_i(t)-\lambda_j(A_2)|$ and $0<\delta_{i,j}\leq \min_{0\leq t\leq 1}|\lambda_i(t)-\theta_j|$.
\end{thm}

\begin{proof}
Let $i\leq k$ be an integer. 
First, we decompose the matrix $A$ from the theorem as
\begin{equation}
    A = \begin{bmatrix}\widehat\Lambda& \\ &A_2 \end{bmatrix}+\begin{bmatrix}
         & E^T \\ E & 
    \end{bmatrix}=:A_0+\widehat E.
\end{equation}
As above let
$F=\big[\begin{smallmatrix}0& F_i^T\\F_i& 0  \end{smallmatrix}\big]$, where 
$F_i=[0_{(n-k)\times(i-1)}\ E_i\ 0_{(n-k)\times(n-i)}]$ 
is the matrix obtained by taking the $i$th row and column of $\widehat E$. 
Let $\widetilde E$ be a matrix such that $\widehat E=F+\widetilde E$, so that 
$A=A_0+\widehat E=A_0+\widetilde E+F$. Note that $\theta_i$ is an eigenvalue of $A_0+\widetilde E$. 
Suppose that $(A_0+\widetilde E+tF)x(t)=\lambda_i(t)x(t)$ for $0\leq t\leq 1$, and define $y(t)=x_{k+1:n}(t)$ 
where $\lambda_i(t)$ is a continuous function of $t$ satisfying $\lambda_i(0)=\theta_i$. 
Our goal is to bound $|\lambda_i-\theta_i|=|\lambda_i(1)-\lambda_i(0)|$ using \eqref{smally2}, so we aim to derive a bound for $\|y(t)\|_2$. 

To achieve this, we next exploit the structure of the perturbation matrix $A_0+\Tilde{E}+tF$ as follows.
For any $j$ such that $j\leq k$ and $i\neq j$, the $j$th row of
 $(A+\widetilde E+tF)x(t)=\lambda_i(t)x(t)$ is 
\[\theta_jx_j(t)+E_j^\ast y(t)=\lambda_i(t)x_j(t),\]
hence 
\begin{equation}  \label{eq:xi2}
|x_j(t)|=\frac{|E_j^\ast y(t)|}{|\theta_j-\lambda_i(t)|}\leq \frac{\|E_j\|_2\|y(t)\|_2}{|\theta_j-\lambda_i(t)|}.  
\end{equation}
The last $n-k$ rows of $(A_0+\widetilde E+tF)x(t)=\lambda_i(t)x(t)$ give
\[
A_2y(t)+\sum_{1\leq j\leq k\atop j\neq i}E_jx_j(t)+tE_ix_i(t)=\lambda_i(t)y(t).
\]
Hence 
\[tE_ix_i(t)=(\lambda_i(t)I-A_2)y(t)-\sum_{1\leq j\leq k\atop j\neq i}E_jx_j(t),\]
so
\begin{align*}
t\|E_i\|_2|x_i(t)|&\geq \|(\lambda_i(t)I-A_2)y(t)-\sum_{1\leq j\leq k\atop j\neq i}E_jx_j(t)\|_2  \\
&\geq \|(\lambda_i(t)I-A_2)y(t)\|_2-\sum_{1\leq j\leq k\atop j\neq i}
\|E_jx_j(t)\|_2  \\
&\geq \|(\lambda_i(t)I-A_2)y(t)\|_2-\sum_{1\leq j\leq k\atop j\neq i}
\frac{\|E_j\|_2^2\|y(t)\|_2}{|\theta_j-\lambda_i(t)|}, 
\end{align*}
where we used \eqref{eq:xi2} to get the last inequality. 
Defining $\Gamma_i(t)=\min_j|\lambda_i(t)-\lambda_j(A_2)|$ 
(note that $\Gamma_i(t)$ is closely related to $\mbox{Gap}_i$; in particular $\Gamma_i(0)=\mbox{Gap}_i$) 
we have $\|(\lambda_i(t)I-A_2)y(t)\|_2\geq \Gamma_i(t)\|y(t)\|_2$, so it follows that 
\begin{align}
\left(\Gamma_i(t)-\sum_{1\leq j\leq k\atop j\neq i}\frac{\|E_j\|_2^2}{|\theta_j-\lambda_i(t)|} \right)\|y(t)\|_2
&\leq t\|E_i\|_2|x_i(t)|. \label{eq:A}
\end{align}
Suppose that there exist positive scalars $\delta_i,\delta_{i,j}$ ($i\neq j$) such that 
\[\delta_i\leq\min_{0\leq t\leq 1}\Gamma_i(t),\quad 
\delta_{i,j}\leq \min_{0\leq t\leq 1}|\theta_j-\lambda_i(t)|.
\]
For example, by Weyl's theorem we can take 
 $\delta_i=\Gamma_i(0)-\|E_i\|_2=\min_j|\theta_i-\lambda_j(A_2)|-\|E_i\|_2$ and 
$\delta_{i,j}= |\theta_j-\lambda_i(0)|-\|E_i\|_2=|\theta_j-\theta_i|-\|E_i\|_2$, provided that they are both positive. 
Thus $\Gamma_i(t)-\sum_{1\leq j\leq k\atop j\neq i}\frac{\|E_j\|_2^2}{|\theta_j-\lambda_i(t)|}\geq  \delta_i-\sum_{1\leq j\leq k\atop j\neq i}\frac{\|E_j\|_2^2}{\delta_{i,j}}$ for $0\leq t\leq 1$, and if the right-hand side is positive then defining 
\begin{equation}  \label{eq:deljineq}
d_i:=\frac{1}{\delta_i-\sum_{1\leq j\leq k\atop j\neq i}\frac{\|E_j\|_2^2}{\delta_{i,j}}}\ (>0), 
\end{equation}
from \eqref{eq:A} we have 
\[
\|y(t)\|_2\leq td_i\|E_i\|_2|x_i(t)|\leq td_i\|E_i\|_2,
\]
where we used $|x_i(t)|\leq \|x(t)\|_2=1$. 
Thus we have a bound for $\|y(t)\|_2$. 
Plugging this into \eqref{smally2} we obtain
\begin{align*}
|\lambda_i(1) -\lambda_i(0)|&
\leq 2\|E_i\|_2\left|\int_0^1 \|y(t)\|_2dt\right|\\
&\leq 2\|E_i\|_2\left|\int_0^1td_i \|E_i\|_2dt\right|\\
&\leq d_i\|E_i\|_2^2.
\end{align*}
Moreover the bound~\eqref{eq:mainbound} holds for any $\delta_i,\delta_{i,j}$ such that $0<\delta_i\leq \min_{j,0\leq t\leq 1}|\lambda_i(t)-\lambda_j(A_2)|$ and $0<\delta_{i,j}\leq \min_{0\leq t\leq 1}|\lambda_i(t)-\theta_j|$.
\end{proof}

The fact that $\theta_i$ and $\lambda_i$ have a one-to-one correspondence was not explicitly stated nor necessary in our derivation of \Cref{thm:main}, but this can be verified by noting that $\lambda_i(t)$ can be taken as the $i$th eigenvalue of $A_0+ \Tilde{E}+tF$ and by defining an appropriate ordering, e.g. by keeping the non-increasing order. Moreover this question is irrelevant when the residuals are small enough as (for well separated Ritz values) the error intervals become disjoint. 
The possible presence of multiple eigenvalues does not affect these arguments for the reason discussed at the end of \Cref{sec:prelim}.

From \Cref{thm:main} we see that our bound is of the form \eqref{eq:goal}, as announced in the introduction. 
In a typical application where the extremal eigenvalues are sought, 
in the limit $E\rightarrow 0$ we have $\theta_i\rightarrow \lambda_i$ for $i=1,\ldots, k$, where $\lambda_i$ is arranged in appropriate (increasing or decreasing) order. 
Furthermore $\min(\lambda(A_2))=\lambda_{k+1}$, hence we have $\mbox{Gap}_i\rightarrow|\lambda_i-\lambda_{k+1}|$, and $d_i$ in \eqref{eq:mainbound} approaches $1/(\lambda_{k+1}-\lambda_i)$. 
It follows that \eqref{eq:mainbound} gives the asymptotic bound 
\begin{equation}  \label{eq:asympt}
\lim_{E\rightarrow 0}\frac{|\lambda_i -\theta_i|}{\|E_i\|_2^2}\leq\frac{1}{|\lambda_{k+1}-\lambda_i|}.
\end{equation}
This asymptotic bound is tighter than any of the known bounds; for example the bound in~\cite{LiLi} is 
$\lim_{E\rightarrow 0}\frac{|\lambda_i -\theta_i|}{\|E\|_2^2}\leq\frac{1}{|\lambda_{k+1}-\lambda_i|}$ (the left-hand side is smaller than in~\eqref{eq:asympt}), 
and the classical bound on a single Ritz value~\cite[Thm.~11.7.1]{Parlett98} gives 
$\lim_{E\rightarrow 0}\frac{|\lambda_i -\theta_i|}{\|E_i\|_2^2}\leq\frac{1}{\min_{j}|\lambda_{j}-\lambda_i|}$ (the right-hand side is larger than in~\eqref{eq:asympt}).  
The lower bounds $\delta_i,\delta_{i,j}$ of $\min_j|\lambda_i(t)-\lambda_j(A_2)|, \min_{0\leq t\leq 1}|\lambda_i(t)-\theta_j|$  as defined in \eqref{del1}, \eqref{del2} use Weyl's theorem and so are crude bounds. However, as confirmed in our numerical experiments, this does not affect the asymptotic sharpness of the theorem in the limit $E\rightarrow 0$, because we still have $d_i\rightarrow \frac{1}{\min_j|\theta_i-\lambda_j(A_2)|}$.

Finally, the asymptotic bound \eqref{eq:asympt} is sharp, provided that no further information on $A_2$ is available. In order to prove this we can show that for the specific case where $A_2=cI$ with $c>\theta_i$ for all $i=1, \ldots, k$, in the limit $E\longrightarrow 0$ we have 
\begin{equation}\label{eq:expansion}
    |\lambda_i - \theta_i| = \dfrac{\|E_i\|_2^2}{\mbox{Gap}_i} + O(\|E_i\|_2^4) 
\end{equation}
To see this, take a fixed $i$ and rearrange the columns and rows of $A-\theta_i I$, with $A$ from \Cref{thm:main}, such that the (1,1) coefficient is zero. This leads to the matrix
\[B =\left[
\begin{array}{c|ccc|c}
    0 & 0 & \cdots & 0 & E_i^T \\
    \hline
    0 & \theta_1-\theta_i & & & E_1^T \\
    \vdots & & \ddots & & \vdots \\
    0 & & & \theta_k-\theta_i & E_k^T \\
    \hline
    E_i & E_1 & \cdots & E_k & (c-\theta_i) I    
\end{array}
\right].
\]
Define $\Tilde{A}_2=B(2:n,2:n)$. Then ~\cite[Thm.~3.1]{LiAndYuji} states that the smallest eigenvalue of the above matrix is $|\lambda_i-\theta_i | = \left|\matr{0_{1\times (k-1)} & E_i^T}\Tilde{A}_2^{-1}\matr{0_{(k-1)\times 1} \\ E_i}\right|+ O(\|E_i\|^4_2/\mbox{Gap}_i^2)$. Assuming that the residuals $E_k, k\neq i$, are small enough the leading terms arising from this expression (e.g. by Neumann series) yields \Cref{eq:expansion}.

\subsection{Error bound for a cluster of Ritz values} \label{sec:cluster}
The bound derived above is sharp in the limit $E\rightarrow 0$, but for finite $E$ we might not have
$\delta_i-\sum_{1\leq j\leq k\atop j\neq i}\frac{\|E_j\|_2^2}{\delta_{i,j}}>0$ or $\delta_{i,j}>0$ for some $j$, 
in which case Theorem~\ref{thm:main} is not applicable. 
Assuming $E$ is not too large, one sees that this can happen only if $\delta_{i,j}\lesssim  \min_{0\leq t\leq 1}|\theta_j-\lambda_i(t)|\approx 0$ 
for some $j$. 
This means there is a cluster of Ritz values and $\theta_i$ belongs to it. 
In \Cref{thm:maincluster} we give a bound that is applicable in such situations. 
Much of the analysis is the same as above. 

\begin{thm}\label{thm:maincluster} 
Let 
\[A=\begin{bmatrix}
\widehat\Lambda& E^{\ast}\\ E&A_2
\end{bmatrix}\]
be a symmetric matrix where $\widehat \Lambda=\mbox{diag}(\theta_1,\ldots,\theta_k)$ 
and $E=[E_1,E_2,\ldots, E_k]$, and suppose that $\ell$ Ritz values $\theta_i,\ldots,\theta_{i+\ell-1}$ 
all lie in the interval $[\lambda_0-\Delta,\lambda_0+\Delta]$. Define ${\cal I}=\{i,i+1,\ldots,i+\ell-1\}$ and 
let $E_{\cal I}=E_{i:i+\ell-1}$ and 
suppose also that 
\begin{align*}
\delta_{\cal I}&=\min_j|\lambda_0-\lambda_j(A_2)|-\Delta-\|E_{\cal I}\|_2 \ (\approx \mbox{Gap}_{\mathcal{I}}-\Delta-\|E_{\mathcal{I}}\|_2)>0,\\
\delta_{{\cal I},j}&=  |\theta_j-\lambda_0|-\Delta-\|E_{\cal I}\|_2>0, \quad \mbox{for}\quad 1\leq j\leq k,\  j\notin {\cal I}
\end{align*}
 and further that 
\[d_{\cal I}=\frac{1}{\delta_{\cal I}-\mathlarger{\sum}_{1\leq j\leq k\atop j\notin {\cal I}}\frac{\|E_j\|_2^2}{\delta_{{\cal I},j}}}>0.\]
Then there exist $\ell$ eigenvalues $\lambda_i,\ldots, \lambda_{i+\ell-1}$ of $A$ satisfying
\begin{equation}  \label{eq:mainboundcluster}
|\lambda_{i+j} -\theta_{i+j}|\leq  d_{\cal I}\|E_{\cal I}\|_2^2, \quad j=0,1,2,\ldots,\ell-1. 
\end{equation}
Moreover, for $j=0,1,\ldots,\ell -1$ ,let $x(t), \lambda_{i+j}(t)$ be continuous functions of $t\in [0,1]$ defined by
\[
\begin{bNiceArray}{ccc|c}[margin]
\widehat \Lambda_{(1:i-1)} & & & E_{1:i-1}^T \\
& \widehat\Lambda_{\mathcal{I}} &  & tE_{\mathcal{I}}^T \\
& & \widehat \Lambda_{(i+\ell:k)} & E_{i+\ell:k}^T \\
\hline
E_{1:i-1} & tE_{\mathcal{I}} & E_{i+\ell:k} & A_2 \\
\end{bNiceArray}x(t) = \lambda_{i+j}(t) x(t).
\]
and $\lambda_{i+j}(0)=\theta_{i+j}$, where we note $\widehat\Lambda_{(a:b)}=\widehat \Lambda(a:b,a:b)$. Then the bound~\eqref{eq:mainboundcluster} holds for any $\delta_{\mathcal{I}},\delta_{\mathcal{I},j}$ such that 
$0<\delta_{\mathcal{I}}\leq \min_{i\in \mathcal{I},j}\min_{0\leq t\leq 1}|\lambda_i(t)-\lambda_j(A_2)|$ and $0<\delta_{\mathcal{I},j}\leq \min_{i\in \mathcal{I}}\min_{0\leq t\leq 1}|\lambda_i(t)-\theta_j|$.
\end{thm}

\begin{proof}
    Suppose that  $\ell$ Ritz values  form a cluster and assume 
without loss of generality that their indices are also clustered in  ${\cal I}=\{i,i+1,\ldots,i+\ell-1\}$, so 
the cluster consists of  $\theta_i,\ldots,\theta_{i+\ell-1}$  all lying in the interval
 $[\lambda_{0}-\Delta,\lambda_0+\Delta]$ (if not, then we can apply
 a permutation to cluster the indices).

As before define $y(t)=x_{k+1:n}(t)$ and let $E_{\cal I}=E_{i:i+\ell-1}$. 

Let $F=
\big[\begin{smallmatrix}
0& F_{\cal I}^T\\F_{\cal I}& 0
\end{smallmatrix}\big]$, where 
$F_{\cal I}=[0_{(n-k)\times(i-1)}\ E_{\cal I}\ 0_{(n-k)\times(n-i-\ell+1)}]$ has $\ell$ nonzero columns, and 
let $\widetilde E$ be a matrix such that $E=F+\widetilde E$. 
Let $\hat i$ be an arbitrary index in ${\cal I}=\{i,i+1,\ldots,i+\ell-1\}$. 
For any $j$ such that $j\leq k$ and $j\notin {\cal I}$, the $j$th row of 
 $(A+\widetilde E+tF)x(t)=\lambda_{\hat i}(t)x(t)$ is 
\[\theta_jx_j(t)+E_j^\ast y(t)=\lambda_{\hat i}(t)x_j(t),\]
hence 
\begin{equation}  \label{eq:xi2cluster}
|x_j(t)|=\frac{E_j^\ast y(t)}{|\theta_j-\lambda_{\hat i}(t)|}\leq \frac{\|E_j\|_2\|y(t)\|_2}{|\theta_j-\lambda_{\hat i}(t)|}.  
\end{equation}
Now, writing $x_{\cal I}=x_{i:i+\ell-1}$, from the last $n-k$ rows of
 $(A+\widetilde E+tF)x(t)=\lambda_{\hat i}(t)x(t)$ we get 
\[
tE_{\cal I}x_{\cal I}(t)+\sum_{1\leq j\leq k\atop j\notin {\cal I}}E_jx_j(t)+A_2y(t)=\lambda_{\hat i}(t)y(t).
\]
Hence
\[tE_{\cal I}x_{\cal I}(t)=(\lambda_{\hat i}(t)I-A_2)y(t)-\sum_{1\leq j\leq k\atop j\notin {\cal I}}E_jx_j(t),\]
so
\begin{align*}
t\|E_{\cal I}\|_2\|x_{\cal I}(t)\|_2&\geq \|(\lambda_{\hat i}(t)I-A_2)y(t)-\sum_{1\leq j\leq k\atop j\notin {\cal I}}E_jx_j(t)\|_2  \\
&\geq \|(\lambda_{\hat i}(t)I-A_2)y(t)\|_2-\sum_{1\leq j\leq k\atop j\notin {\cal I}}\frac{\|E_j\|_2^2\|y(t)\|_2}{|\theta_j-\lambda_{\hat i}(t)|}, 
\end{align*}
where we used \eqref{eq:xi2cluster} to get the last inequality.

Defining $\Gamma_{\cal I}(t)=\min_{i\in{\cal I},j}|\lambda_i(t)-\lambda_j(A_2)|$ we have 
 $\|(\lambda_{\hat i}(t)I-A_2)y(t)\|_2\geq \Gamma_{\cal I}(t)\|y(t)\|_2$, so  it follows that 
\begin{align*}
\left(\Gamma_{\cal I}(t)-\sum_{1\leq j\leq k\atop j\notin {\cal I}}\frac{\|E_j\|_2^2}{|\theta_j-\lambda_{\hat i}(t)|} \right)\|y(t)\|_2
&\leq t\|E_{\cal I}\|_2\|x_{\cal I}(t)\|_2, 
\end{align*}
Suppose that there exist positive scalars $\delta_{\cal I},\delta_{{\cal I},j}$ for  $j\notin {\cal I}$  such that 
\[\delta_{\cal I}\leq\min_{0\leq t\leq 1}\Gamma_{\cal I}(t),
\quad \delta_{{\cal I},j}\leq \min_{i\in{\cal I}}\min_{0\leq t\leq 1}|\theta_j-\lambda_i(t)|.\]
For example, by Weyl's theorem we can take
\begin{align*}
\delta_{\cal I}&=
\min_j|\lambda_0-\lambda_j(A_2)|-\Delta-\|E_{\cal I}\|_2,\\
\delta_{{\cal I},j}&= |\theta_j-\lambda_{\hat i}(0)|-\|E_{\cal I}\|_2\geq |\theta_j-\lambda_0|-\Delta-\|E_{\cal I}\|_2. 
\end{align*}
Thus $
\Gamma_{\cal I}(t)-\sum_{1\leq j\leq k\atop j\notin {\cal I}}\frac{\|E_j\|_2^2}{|\theta_j-\lambda_{\hat i}(t)|} 
\geq  \delta_{\cal I}-\sum_{1\leq j\leq k\atop j\notin{\cal I} }\frac{\|E_j\|_2^2}{\delta_{{\cal I},j}}$ for $0\leq t\leq 1$, 
and if the right-hand side is positive then defining 
\begin{equation}  \label{eq:deljineqcluster}
d_{\cal I}:=\frac{1}{\delta_{\cal I}-\sum_{1\leq j\leq k\atop j\notin {\cal I}}\frac{\|E_j\|_2^2}{\delta_{{\cal I},j}}} \ (>0)  ,
\end{equation}
we have 
\[
\|y(t)\|_2\leq td_{\cal I}\|E_{\cal I}\|_2\|x_{\cal I}\|_2
\leq td_{\cal I}\|E_{\cal I}\|_2. 
\]
Plugging this into \eqref{smally} 
yields 
\begin{align*}
|\lambda_{\hat i}(1) -\theta_{\hat i}|&
\leq 2\left|\int_0^1 x_{1:k}(t)^T E_{\cal I} y(t)dt\right|\\
&\leq 2\|E_{\cal I}\|_2\left|\int_0^1 t\|y(t)\|_2dt\right|
\qquad (\|x_{1:k}(t)\|_2\leq 1)\\
&\leq 2\|E_{\cal I}\|_2\left|\int_0^1td_{\cal I} \|E_{\cal I}\|_2dt\right|\\
&\leq d_{\cal I}\|E_{\cal I}\|_2^2. 
\end{align*}
The same argument holds for any $\hat i\in {\cal I}$, which proves the bound~\eqref{eq:mainbound}. Moreover, the bound~\eqref{eq:mainbound} holds for any
$\delta_{\cal I}, \delta_{{\cal I},j}$ such that 
$0<\delta_{\cal I}\leq \min_{0\leq t\leq 1}|\lambda_i(t)-\lambda_j(A_2)|$
 and $0<\delta_{{\cal I},j}\leq \min_{i\in{\cal I}}\min_{0\leq t\leq 1}|\lambda_i(t)-\theta_j|$, 
where $\lambda_i(t),\lambda_{i+1}(t),\ldots, \lambda_{i+\ell-1}(t)$ are continuous functions of
 $t$ such that $\lambda_{i+j}(t)$ is an eigenvalue of $A+\widetilde E+tF$ with
 $\lambda_{i+j}(0)=\theta_{i+j}$ for $j=0,1,\ldots,\ell-1$. 
\end{proof}
Conceptually, in the definition of $\delta_{\cal I}=\min_j|\lambda_0-\lambda_j(A_2)|-\Delta-\|E_{\cal I}\|_2$, the term 
$\min_j|\lambda_0-\lambda_j(A_2)|$ corresponds to the big $\mbox{Gap}_{\cal I}$ in previous discussions.


\section{Error bound for singular values}\label{sec:svd_bd}
In this section, recalling the setup of \Cref{eq:RR_svd_structure}, we generalize Theorem \ref{thm:main} to the singular values of arbitrary matrices of size $m$ by $n$. To do this, we use the Jordan-Wielandt theorem from ~\cite[Thm. I.4.2]{Stewart90}~\cite[Thm. 7.3.3]{Horn2012}, which allows us to 
apply the bound from the symmetric case to obtain results on the singular values. This theorem states that if $M$ is an $m$ by $n$ matrix with $m>n$ and singular values $\sigma_1\geq \sigma_2 \geq \ldots \geq \sigma_n$, then the eigenvalues of the symmetric matrix $\matr{0 & M \\ M^T & 0}$ are $\sigma_1\geq \sigma_2 \geq \ldots \geq \sigma_n\geq 0 \geq -\sigma_n \geq -\sigma_2\geq \ldots \geq -\sigma_1$, where the eigenvalue $0$ has multiplicity $m-n$. This classical result was also used in \cite{LiLi,lazzarino} for a similar purpose. 

This leads to the following theorem. 

\begin{thm}\label{thm_svd}
    Let \[
    A = \matr{\widehat\Sigma & E^T \\ F & A_2}
    \]
    be an $m$-by-$n$ matrix ($m\neq n$) where $\widehat\Sigma = \mbox{diag}(\theta_1, \ldots, \theta_k)$, $E =[E_1,E_2,\ldots, E_k]$ and $F =[F_1,F_2,\ldots, F_k]$. For $i=1,2,\ldots, k$, suppose that 

\begin{align*}
    \delta_i &= \min(|\theta_i|,\min_j |\theta_i - \sigma_j(A_2)|)-\sqrt{(\|E_{i}\|^2_2 + \|F_{i}\|^2_2)/2} >0 \\
    \delta_{i,j} &= |\theta_i-\theta_j |- \sqrt{(\|E_{i}\|^2_2 + \|F_{i}\|^2_2)/2} >0, \quad\text{for} \quad 1\leq j\leq k, \; j\neq i \\
    \delta'_{i,j} &= |\theta_i+\theta_j |- \sqrt{(\|E_{i}\|^2_2 + \|F_{i}\|^2_2)/2} >0, \quad\text{for} \quad 1\leq j\leq k
\end{align*}
and that
\begin{align*}
    d_i = \dfrac{1}{\delta_i - \mathlarger{\sum}_{1\leq j\leq k \atop j\neq i} \dfrac{\|E_j\|^2_2+\|F_j\|^2_2}{2\delta_{i,j}} -\mathlarger{\sum}_{1\leq j\leq k} \dfrac{\|E_j\|^2_2+\|F_j\|^2_2}{2\delta'_{i,j}}} > 0.
\end{align*}
Then there exists a singular value $\sigma_i$ of $A$ such that 
\[|\sigma_i -\theta_i|\leq  d_i\dfrac{\|E_{i}\|_2^2 + \|F_{i}\|_2^2}{2}.\]
\end{thm}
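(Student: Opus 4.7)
\textbf{Proof Proposal for \Cref{thm_svd}.}

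The plan is to apply the Jordan-Wielandt trick to lift the singular-value problem to a symmetric eigenvalue problem, then reduce to a matrix of the form required by \Cref{thm:main}. Specifically, I would form the $(m+n)\times(m+n)$ symmetric matrix
\[
\widetilde A=\begin{bmatrix}0 & A\\ A^{\ast} & 0\end{bmatrix},
\]
whose eigenvalues are $\pm\sigma_j(A)$ together with $|m-n|$ extra zeros. Writing $A$ in the four-block form $\widehat\Sigma, E^{\ast}, F, A_2$ and applying a block permutation $P$ that groups together the two "short" index sets (of size $k$) and the two "long" index sets (of sizes $m-k$ and $n-k$), we obtain
\[
P^{\ast}\widetilde A P=\begin{bmatrix}\widehat M & C \\ C^{\ast} & D\end{bmatrix},\quad
\widehat M=\begin{bmatrix}0 & \widehat\Sigma\\ \widehat\Sigma^{\ast} & 0\end{bmatrix},\
C=\begin{bmatrix}0 & E^{\ast}\\ F^{\ast} & 0\end{bmatrix},\
D=\begin{bmatrix}0 & A_2\\ A_2^{\ast} & 0\end{bmatrix}.
\]

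Next I would diagonalize $\widehat M$ with the Hadamard-type unitary $U=\tfrac{1}{\sqrt 2}\begin{bmatrix}I_k & I_k\\ I_k & -I_k\end{bmatrix}$ (which sends $\widehat M$ to $\widehat\Lambda=\mbox{diag}(\theta_1,\ldots,\theta_k,-\theta_1,\ldots,-\theta_k)$), while leaving the trailing block alone. Setting $W=\mbox{diag}(U,I_{m+n-2k})$ yields the symmetric matrix
\[
\bar{\bar A}:=W^{\ast}P^{\ast}\widetilde A PW=\begin{bmatrix}\widehat\Lambda & \widetilde E^{\ast}\\ \widetilde E & D\end{bmatrix},\quad
\widetilde E=C^{\ast}U=\tfrac{1}{\sqrt 2}\begin{bmatrix}F & -F\\ E & E\end{bmatrix},
\]
so that the $i$th and $(k+i)$th columns of $\widetilde E$ both have squared norm $(\|E_i\|_2^2+\|F_i\|_2^2)/2$. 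This matrix is in exactly the form of \Cref{thm:main}, with $2k$ Ritz values $\{\theta_j\}\cup\{-\theta_j\}$ and a $(2)\times 2$-blocked $D$ whose eigenvalues are $\{\pm\sigma_j(A_2)\}\cup\{0\}$.

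Then I would translate the hypotheses: the gap from $\theta_i$ to the spectrum of $D$ is $\min_j|\theta_i-\lambda_j(D)|=\min\bigl(|\theta_i|,\min_j|\theta_i-\sigma_j(A_2)|\bigr)$, since the zero eigenvalue of $D$ contributes the $|\theta_i|$ term and $|\theta_i+\sigma_j(A_2)|\ge|\theta_i|$ for $\sigma_j(A_2)\ge 0$; this matches $\delta_i$ of the theorem. The gaps to the other diagonal entries of $\widehat\Lambda$ split into two families, $|\theta_i-\theta_j|$ and $|\theta_i+\theta_j|$, producing the quantities $\delta_{i,j}$ and $\delta'_{i,j}$. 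The denominator of $d_i$ in \Cref{thm:main}, which sums $\|\widetilde E_\ell\|_2^2$ over the $2k-1$ other diagonal indices, then splits into two sums weighted by $(\|E_j\|_2^2+\|F_j\|_2^2)/2$ over $\delta_{i,j}$ and $\delta'_{i,j}$ respectively, exactly matching $d_i$ in the statement. Applying \Cref{thm:main} to $\bar{\bar A}$ produces an eigenvalue $\lambda$ of $\widetilde A$ satisfying $|\lambda-\theta_i|\le d_i\|\widetilde E_i\|_2^2=d_i(\|E_i\|_2^2+\|F_i\|_2^2)/2$, and this eigenvalue must be a (positive) singular value of $A$ because $\theta_i>0$ and the bound is smaller than $\theta_i$ (this follows from $\delta_i\le\theta_i$ in the hypothesis).

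I anticipate that the genuinely mathematical part is already done by \Cref{thm:main}; the main obstacle is purely bookkeeping, namely verifying that after the permutation and Hadamard conjugation the column norms of $\widetilde E$ are $\sqrt{(\|E_i\|_2^2+\|F_i\|_2^2)/2}$ for both the indices corresponding to $\theta_i$ and to $-\theta_i$, and that the sum over "other" diagonal indices in \Cref{thm:main} indeed produces the two separate sums over $\delta_{i,j}$ and $\delta'_{i,j}$. A small subtlety is the identification of the matched eigenvalue as a positive singular value rather than a negative one or zero; this can be handled either by invoking the one-to-one correspondence in \Cref{thm:main} together with the ordering of diagonal entries, or simply by observing that the bound is strictly less than $\theta_i$ so the closest eigenvalue of $\widetilde A$ must lie in the positive half of the Jordan-Wielandt spectrum.
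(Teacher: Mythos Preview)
Your proposal is correct and follows essentially the same route as the paper: form the Jordan--Wielandt matrix, permute the blocks so the two $k\times k$ pieces sit in the top-left corner, conjugate by the Hadamard-type unitary $\tfrac{1}{\sqrt 2}\bigl[\begin{smallmatrix}I&I\\ I&-I\end{smallmatrix}\bigr]$ to diagonalize that corner, and then invoke \Cref{thm:main}. The only cosmetic differences are the sign convention in the Hadamard matrix (the paper uses $\tfrac{1}{\sqrt 2}\bigl[\begin{smallmatrix}I&-I\\ I&I\end{smallmatrix}\bigr]$, yielding the order $(-\widehat\Sigma,\widehat\Sigma)$ rather than $(\widehat\Sigma,-\widehat\Sigma)$) and that you spell out more of the bookkeeping---the identification of $\delta_i$, $\delta_{i,j}$, $\delta'_{i,j}$ and the positivity of the matched eigenvalue---than the paper does.
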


\begin{proof}
We consider the following matrix whose eigenvalues are $\{\sigma_i, -\sigma_i\}_{i=1, \ldots, n}$ and $0$, according to the Jordan-Wielandt theorem:
\[
\matr{  &   & \widehat{\Sigma} & E^T \\ 
      &   & F & A_2 \\
    \widehat{\Sigma} & F^T &   &   \\
    E & A_2^T &   &  }.
\]
As in \cite{LiLi, lazzarino}, by permuting the second and third block columns then the second and third block rows we obtain 
\[\Tilde{A}= \left[
\begin{array}{cc|cc}
    & \widehat{\Sigma} &   & E^T \\ 
    \widehat{\Sigma} &  & F^T &   \\
    \hline 
      & F &  & A_2 \\
    E &  & A_2^T & 
\end{array}\right] =: \matr{\Tilde{\Sigma} & E_{tot}^T \\ E_{tot} & \Tilde{A}_2}.
\]
Finally, using the unitary matrices 
$M=\frac{1}{\sqrt{2}}\matr{I_k & -I_k \\ I_k & I_k}$ and $P = \matr{M & 0 \\ 0 & I_{m+n-2k}}$ we have
\[
P\Tilde{A} P^T =  \left[
\begin{array}{cc|cc}
    -\widehat{\Sigma}& & \frac{1}{\sqrt{2}}F^T  & \frac{1}{\sqrt{2}}E^T \\ 
    & \widehat{\Sigma} & \frac{1}{\sqrt{2}}F^T & -\frac{1}{\sqrt{2}}E^T \\
    \hline 
    \frac{1}{\sqrt{2}}F & \frac{1}{\sqrt{2}}F &  & A_2 \\
    \frac{1}{\sqrt{2}}E & -\frac{1}{\sqrt{2}}E  & A_2^T & 
\end{array}\right]
\]
which is symmetric and of the form $B = \matr{\Tilde{\Lambda} & \Tilde{E}^T \\ \Tilde{E} & \Tilde{A}_2}$, with $\Tilde{E} = E_{tot}M^T = \frac{1}{\sqrt{2}}\matr{F & F \\ E & -E}$.

Then the Theorem \ref{thm:main} is directly applicable to $B$, which proves \Cref{thm_svd}. The slight change in the coefficient $\delta_i$ (compared to \Cref{thm:main}) comes from the fact that in addition to the $\pm \sigma_j(A_2)$ the matrix $\Tilde{A}_2$ also has the eigenvalue $0$, and the additional terms $\delta'_{i,j}$ appear from the block $-\widehat \Sigma$ from $\widetilde \Lambda$. %
\end{proof}

To compare our bound with \Cref{eq:liliSVDbd} from \cite{LiLi}, notice that the numerator in \Cref{eq:liliSVDbd} is always larger than the one from our bound since $\max \{\|E\|_2, \|F\|_2\}^2 \geq (\|E\|_2^2+\|F\|_2^2)/2\geq (\|E_i\|_2^2+\|F_i\|_2^2)/2$ for $i=1, \ldots, k$, while the denominator tend to $\mbox{Gap}_i$ in both bounds, which shows that asymptotically our bound is sharper, as announced in the introduction. 

The bound from \cite{lazzarino} with the structure considered in our \cref{thm_svd} gives \Cref{eq:lorenzoSVDbd} 
which is asymptotically looser than \Cref{eq:liliSVDbd} since we can assume $\min_k |\sigma_i-\sigma_k(A_2)|\approx \mbox{Gap}_i= \min_j |\theta_i - \sigma_j(A_2)|$ as $E\rightarrow0$. 
However we believe that \cite{lazzarino} would provide a better tool to analyze the error when the algorithm used for the approximate SVD 
has perturbation terms also in the diagonal blocks.
This was shown to be the case for the generalized Nyström approach in \cite[Sec.3]{lazzarino}.

In practice the residuals 
$E$ and $F$ may differ significantly (e.g. $E=0$ in HMT). 
However, our numerical experiments suggest that this does not affect the quality of the bound very much (see \Cref{sec:exp_svd}). Additionally, we derived another bound specifically for the HMT perturbation structure where $E=0$, by starting from a singular value perturbation result similar to \Cref{lem0} and bounding the components of the singular vectors. We obtained a bound that is theoretically slightly looser than \Cref{thm_svd}, so we omit this extension.


\section{Numerical experiments}\label{sec:experiments}

In this section we present several experiments to illustrate the validity and the sharpness of our theoretical bounds compared to previous existing bounds. We first test our error bounds for the eigenvalues of a (synthetic) symmetric matrix, then for singular values of arbitrary matrices. The latter subsection also discusses the application of our bound to the singular values obtained from the randomized SVD algorithm by Halko, Martinson and Tropp \cite{HMT}, 
which is so far the most widely used randomized method for low-rank approximation of large scale matrices.

\subsection{Numerical experiments on the eigenvalues of symmetric matrices} \label{sec:exp_sym}
We generate a symmetric matrix  $A=VDV^T$ of size $n=2000$
where $V\in \R^{n\times n}$ is orthogonal and $D$ is a diagonal matrix of eigenvalues. 
Suppose that we are looking for $k=100$ eigenpairs, and we run 40 steps of LOBPCG~\cite{LOBPCG_code} (for \textit{Locally Optimal Block Preconditioned Conjugate Gradient} method) without preconditioning\footnote{
An effective preconditioner can speed up the convergence of any eigenvalue, but it is usually still true that the extremal Ritz values converge first.} to obtain $k$ sets of Ritz pairs $(\theta_i,\widehat x_i)$ for $i=1,\ldots,k$. By default LOBPCG approximates the smallest eigenvalues. 
We next evaluate the errors in the Ritz values $|\theta_i-\lambda_i|$ and compare their bounds both in the context of well separated eigenvalues and for a matrix $D$ that has a cluster of eigenvalues.  For the initial guess $X_0$ we used a randomly generated  $n\times k$ Haar distributed matrix using MATLAB's {\tt orth(randn(n,k))}.

\paragraph{Well-separated Ritz values}
We let the eigenvalues be uniformly distributed with $D=\mbox{diag}(1,2,\ldots, n)$. 
To illustrate the sharpness of our theoretical bound we first use the exact value of $\mbox{Gap}_i$, that is, taking (the usually unknown quantity) $\lambda_j(A_2)$ as known. The resulting bound is shown as ``Thm.~\ref{thm:main} exact''. However in practice this distance is usually unknown, so to invoke Theorem~\ref{thm:main} using only information that is usually available, we use the estimate $\mbox{Gap}_i\geq \max_k |\theta_i-\theta_{k}|$, hence we let $\delta_i=\max_k |\theta_i-\theta_{k}|-\|E_i\|_2$ instead of \eqref{del2}. This assumption is valid when the eigensolver approximates the extreme eigenvalues and the approximation is high quality. 
The resulting bound corresponds to ``Thm.~\ref{thm:main} approximate'' in \Cref{fig:sym_fig_sep}. 
We compare our bound to the exact error $|\theta_i-\lambda_i|$, the bound \eqref{eq:liliSymbd} from~\cite{LiLi} (shown as ``Li-Li (large gap)''), the bound \eqref{eq:lili_11block} which is \eqref{eq:liliSymbd} applied to the partitioning~\ref{eq:11block} (``Li-Li (1-1 block)''), the ``classical" bound~\eqref{eq:classicalbd}, and $\|E_i\|_2$ which is the crude bound using Weyl's theorem. 
Notice from Weyl's bound that $\|E_1\|_2\lesssim \|E_2\|_2\lesssim \ldots \lesssim \|E_k\|_2$ with $\|E_1\|_2\ll \|E_k\|_2$, which is a typical graded behavior as mentioned in the introduction. 

\begin{figure}
    \centering
    \includegraphics[width=0.8\linewidth]{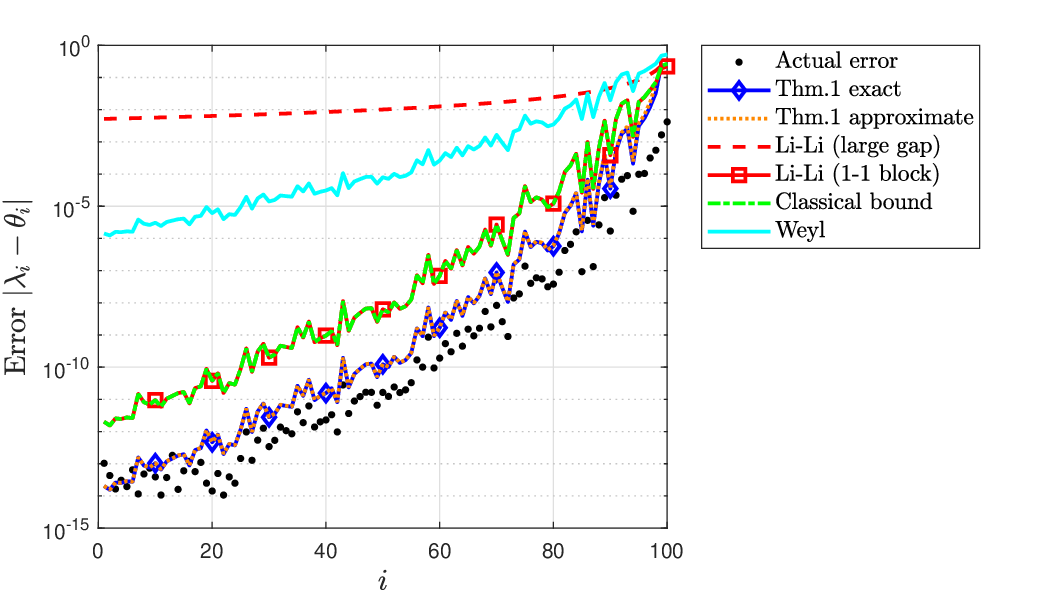}
    \caption{Error in Ritz values $|\theta_i-\lambda_i|$ and error bounds for uniformly distributed eigenvalues: $\lambda_i=i, \: \forall i\in [1,n]$.}
    \label{fig:sym_fig_sep}
\end{figure}

In \Cref{fig:sym_fig_sep} we observe that Theorem~\ref{thm:main} gives the sharpest error bound for the Ritz values (whether or not $\lambda(A_2)$ is known or estimated), especially for small $i$. 
The bounds Classical and Li-Li (1-1 block) are also of good quality and are nearly identical in this experiment. The latter and our theorem reflect the $\mathcal{O}(\|E_i\|_2^2)$ trend of the actual errors, unlike Weyl's theorem. For interior eigenvalues, our \Cref{thm:main} gives results close to Li-Li and Classical (because the spectral gaps in all three bounds are about the same); but as we move to extreme eigenvalues, the term $\mbox{Gap}_i$ in the denominator in \Cref{thm:main} increases (and the residual $\|E_i\|_2$ decreases), which leads to significantly higher accuracy (here, about a factor 100 improvement). This illustrates the key idea that our bounds are asymptotically sharp as $E\rightarrow 0$.

The bound ``Li-Li (large gap)'' mainly serves to illustrate that, if the residual in the numerator is $\|E\|_2$ the accuracy of the bound is significantly lower, even if the gap in the denominator is $\mbox{Gap}_i$. This is intuitive when the residuals are graded, like in this example.

Note that the two versions of our bounds (exact and approximate) are nearly identical, resulting in the two bounds being nearly superimposed in \Cref{fig:sym_fig_sep} (as well as in the other experiments below). In color print the exact bound (blue line) and the approximate bound (orange dots) are distinguishable but in black-and-white they might appear as the same line.
Hence we argue that Theorem~\ref{thm:main} can give tight bounds using information that is usually available in practice. 
By contrast we note that the Classical result~\eqref{eq:classicalbd} uses a bound on the gap between $\theta_i$ and the eigenvalues of an $(n-1)\times (n-1)$ matrix, so it is generally not a practical bound to use (although reasonable estimates of the gap can often be obtained). 

Another note of caution when using the bound $\|E_i\|_2$ (and possibly  Classical) is that the Ritz values and eigenvalues of $A$ may not have a 
one-to-one correspondence, as noted in~\cite[Sec.~11.5]{Parlett98}. Our results and Li-Li overcome this difficulty as it explicitly specifies a one-to-one correspondence between $\theta_i$ and $\lambda_i$.

\paragraph{Clustered Ritz values}
To illustrate the effectiveness of Theorem~\ref{thm:maincluster} in the presence of a clustered eigenvalue, 
we run the same experiment as above 
but now we replace 
the 20th to 29th eigenvalues of $A$ by $20+{\tt randn(1,10)*1e-10}$
 so that there are 10 eigenvalues clustered around $20$. 
Since the clustered Ritz values lied in $20\pm 10^{-10}$, 
we set ${\cal I}=\{20,\ldots,29\}$ 
with $\lambda_0=20, \Delta=10^{-10}$ when invoking Theorem~\ref{thm:maincluster} to bound the errors in $\theta_i$ for $i=20,\ldots,29$, and for the rest we let ${\cal I}=\{i\}$ with $\delta=0$, 
which reduces to Theorem~\ref{thm:main}. We also performed the same experiment where 10 eigenvalues of $A$ are clustered around $\lambda_0=100$ with $\Delta = 10^{-10}$, such that the Ritz values near $i=k$, ie. near $\lambda(A_2)$, are clustered. For both values of $\lambda_0$, in "Li-Li (1-1 block)" we used a partitioning similar to \eqref{eq:11block} that puts the clustered Ritz values in the 1-1 block and gives the residual $\|E_{\cal I}\|_2^2$ in the numerator of the bound. The results are presented in \Cref{fig:sym_fig_clust}.

\begin{figure}[htbp]
    \centering
    \begin{subfigure}[t]{0.49\textwidth}
        \centering
        \includegraphics[width=\linewidth]{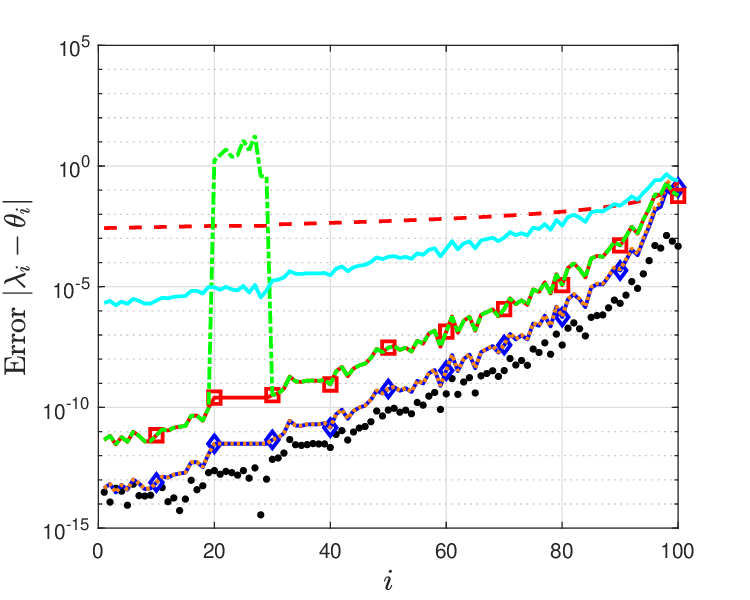}
        \caption{Cluster: $\lambda_i=20+10^{-10}\times\tt{randn(1)}, \: \forall i\in [20,29]$.}
        \label{fig:sym_fig_clust1}
    \end{subfigure}
    \hfill
    \begin{subfigure}[t]{0.49\textwidth}
        \centering
        \includegraphics[width=1\linewidth]{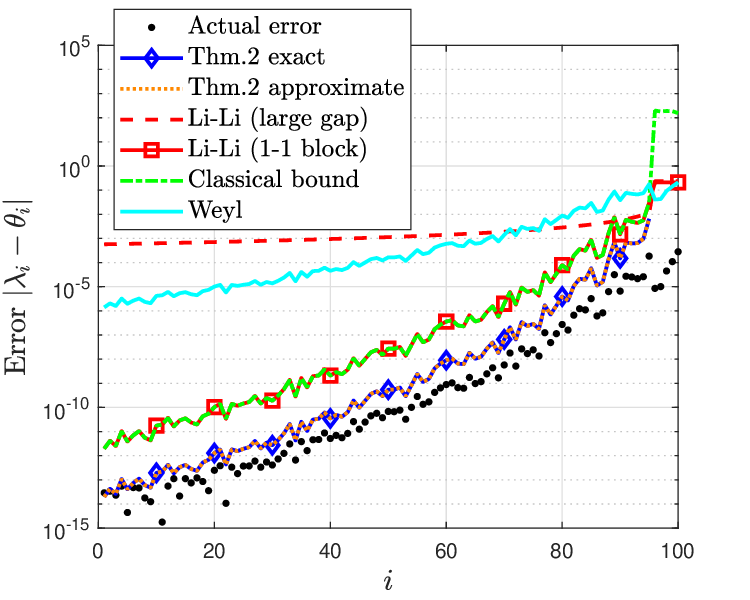}
        \caption{Cluster: $\lambda_i=100+10^{-10}\times\tt{randn(1)}, \: \forall i\in [96,105]$. 
        Note that our bounds are inapplicable for $i\geq 96$, as the assumptions $\delta_{\mathcal{I}}>0$ and $d_{\mathcal{I}}>0$ do not hold. }
        \label{fig:sym_fig_clust2}
    \end{subfigure}
    \caption{Error in Ritz values $|\theta_i-\lambda_i|$ and error bounds for uniformly distributed eigenvalues ($\lambda_i=i$) and a cluster of 10 eigenvalues at $\lambda_0=20$ (left) and $\lambda_0=100$ (right).} 
    \label{fig:sym_fig_clust}
\end{figure}

For the uniformly distributed eigenvalues, the same comments as in the previous experiment apply, therefore we focus our attention on the clustered eigenvalues. In both experiments the classical bound is highly inaccurate to bound the errors on the cluster (because $\widetilde{\mbox{gap}}_i$ in the denominator of \Cref{eq:classicalbd} is approximately $ 10^{-10}$). Remarkably, Li-Li (1-1 block) gives accurate results both when $\lambda_0=20$ and $\lambda_0=100$. Our \Cref{thm:maincluster} is more accurate than Li-Li when the cluster is well inside the set of approximated eigenvalues (see \Cref{fig:sym_fig_clust1}), but it is inapplicable in \Cref{fig:sym_fig_clust2}, where the Ritz values cluster together with some eigenvalues of $A_2$. On the other hand, the bound from \cite{LiLi} is still applicable, so in this situation it may be necessary to use \eqref{eq:lili_11block} instead of our \Cref{thm:maincluster}.

In practice, the main assumption that we need to check in order to decide if our bounds apply is $\|E_i\|_2^2 < \mbox{Gap}_i^2, \forall i=1,\ldots,k $. The small gaps $|\theta_i-\theta_j|$ between neighboring Ritz values are less of a limitation since we can decide to consider certain Ritz values as clustered to ensure that $\delta_{i,j}>0, \forall j\neq i$, as long as no cluster is not confounded with a part of $\sigma(A_2)$. Therefore, for fixed residual norms, our bounds are particularly efficient when the distribution of the eigenvalues of $A$ (especially for the eigenvalues that we seek to approximate) is steep. Nonetheless, when $E$ is sufficiently small relative to the gaps, our bound will give $c\dfrac{\|E_i\|_2^2}{\mbox{Gap}_i}$ with $c\approx 1$, which is tight as shown in~\eqref{eq:expansion}.

\paragraph{Numerical experiments with the Lanczos algorithm}
In this experiment, we apply the Lanczos algorithm \cite{Paige1976ErrorAO,Lanczos1950} (with re-orthogonalization) instead of LOBPCG to obtain a trial matrix of eigenvectors $Q_1$. 
By construction of $Q_1$ from the Lanczos iteration we have
\[
AQ_1 = Q_1 T_k + q_{k+1}[0, \ldots, 0, t_{k+1,k}]
\]
with $T_k\in \Rm{k}{k}$ a tridiagonal matrix, $q_{k+1}\in \mathbb{R}^n$ a vector which is orthogonal to the columns of $Q_1$ and with $t_{k+1,k} = \|v-v^TQ_1(:,k)-v^TQ_1(:,k-1)\|_2$ where $v=AQ_1(:,k)$. Then noting $Q_\perp$ the complement of $\matr{Q_1 & q_{k+1}}$ we have
\[
\Bar{A} = \matr{Q_1 & q_{k+1} & Q_\perp}^TA \matr{Q_1 & q_{k+1} & Q_\perp} =
\begin{bNiceArray}{cccc|cc}[margin]
\Block{4-4}<\Large>{T_k} & & & & 0 & 0 \\
& & & & \Vdots & \vdots \\
& & & & 0 & \vdots \\
& & & & t_{k+1,k} & 0 \\
\hline
0 & \Cdots& 0 & t_{k+1,k} & \Block{2-2}<\Large>{*} & \\
0 & \Cdots & \Cdots & 0 & &
\end{bNiceArray}
\] 
and using the eigendecomposition of the tridiagonal block $T_k=U\widehat\Lambda U^T$ we obtain the perturbation matrix
\[
\matr{U^T & \\ & I} \Bar{A} \matr{U & \\ & I} = 
\begin{bNiceArray}{c|cc}[margin]
\hat \Lambda & e & 0 \\
\hline
e^T & \Block{2-2}{A_2} & \\
0 & & 
\end{bNiceArray}
\]
with $e=U^T[0, \ldots, 0, t_{k+1,k}]^T= t_{k+1,k} U^T(:,k)$. Therefore we have a particular structure where each  residual vector $E_i$ only has one nonzero component, namely their first component.

As in LOBPCG, with the Lanczos algorithm usually the extreme eigenvalues converge faster than the interior eigenvalues. Note that $t_{k+1,k}$ can be relatively large, leading to large residuals $\|E_i\|$ for some $i$,  
therefore we have to take a relatively large number of Lanczos iterations to ensure that a sufficient number of Ritz values have small residuals. 

In our experiment, we take $n=2000$ and we use 
Lanczos with full reorthogonalization to compute the trial subspace $Q_1$. We set the size of the Krylov subspace to $k=400$.
Some $\theta_i$ have relatively large residuals therefore, to test our bound, we rearranged the structured matrix such that only the 20 smallest approximated eigenvalues are in the $(1,1)$ diagonal block and the other ones are included in the lower diagonal block. 
Note that adding the larger Ritz values in the block (2,2) should not change the gap $\mbox{Gap}_i$, as those eigenvalues are far from the ones approximated in the block (1,1).  The results are shown in \Cref{fig:lanczos}. 
Again our bounds are the sharpest for the smallest 
eigenvalues as $\|E_i\|_2\rightarrow 0$, and the approximate bound is close to the exact one.

\begin{figure}[h]
    \centering
    \includegraphics[width=0.5\linewidth]{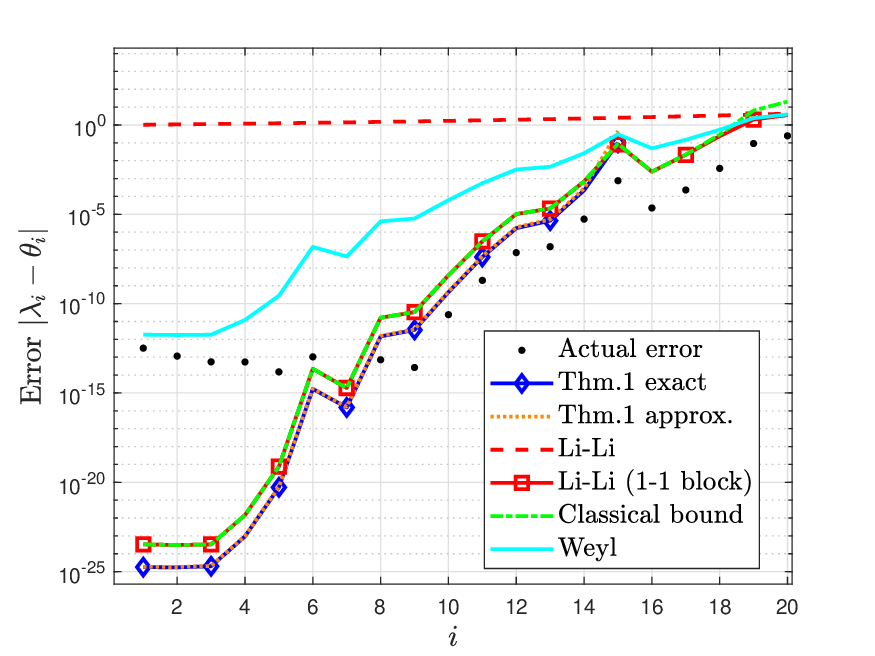}
    \caption{Error $|\theta_i-\lambda_i|$ and bounds for approximate eigenvalues obtained with the Lanczos algorithm. The eigenvalues of $A$ that were approximated here are $\lambda_i \in [1,20]$. Some data points are above the bounds but only because of the limited machine precision used in the experiment.}
    \label{fig:lanczos}
\end{figure}

\subsection{Numerical experiments for singular values obtained via the Rayleigh-Ritz and HMT method}\label{sec:exp_svd}
We now illustrate the quality of the error bound derived in \Cref{sec:svd_bd} for singular values of a $m$-by-$n$ matrix. We defined a matrix $A$ of size $m=5000$ by $n=1000$ 
with geometrically decaying singular values using MATLAB's command {\tt gallery('randsvd',[m,n],1e20)}. 
To find approximate singular subspaces $Q_1\in \R^{m\times k} ,Q_2\in \R^{n\times k}$ we used both the simple and the double power iteration. For instance for a single power iteration, we start from random normal matrices $\Omega_1, \Omega_2$ and take $Q_1 = \mbox{orth}(A\Omega_1)$ and $Q_2 = 
\mbox{orth}(\Omega_2^TA)$. 
For a double power iteration, instead of applying $A$ one applies $AA^TA$. We approximated the $k=200$ largest singular values with several methods.

To apply the error bounds from Theorem \ref{thm_svd} for the PG method we used the structure \eqref{eq:RR_svd_structure} detailed in \Cref{sec:intro}. 
We also applied this theorem to the case where the HMT algorithm is used to approximate the singular values. Let us derive the perturbation matrix associated to HMT. 
The main difference from PG in HMT  is that only the left trial subspace $Q_1$ of $A$ is used (i.e., one can think $Q_2=I_n$ in HMT). We kept the same trial subspace to plot the RR and the HMT results (in left and right plots of \Cref{HMT_and_RR_power1,HMT_and_RR_power2}) to make sure that the results are comparable. 
Then let $Q_1Q_1^TA$ be the HMT approximation of $A$. Taking the full SVD of the matrix $Q_1^TA=U_0 \matr{\Sigma_0 & 0} \matr{V_0^T \\ V_{0\perp}^T}$ and noting $Q_{tot} = [Q_1 \quad Q_{1\perp}]$, 
we see that the perturbation matrix underlying the HMT approximation can be written as 
\[
\matr{U_0^T & 0 \\ 0 & I_{m-r}} Q_{tot}^T  A \matr{V_0 & V_{0\perp}} = \matr{\Sigma_0 & 0 \\ Q_{1\perp}^T A V_0 & Q_{1\perp}^T A V_{0\perp} }
\]
which is a particular case of the structured matrix from \eqref{thm_svd}, with one of the error blocks being zero. 
Both in the PG method and HMT, the residual norms can be computed with the available information.

For both approximation methods, we compare our bounds (exact and approximate, as in the symmetric case) to the bound \eqref{eq:liliSVDbd} given in \cite{LiLi}, to \eqref{eq:lorenzoSVDbd} from \cite{lazzarino} and to Weyl's theorem ~\cite[Cor. 7.3.5]{Horn2012} ~\cite[Cor. I.4.31]{Stewart1998}. 
The results are shown in \Cref{HMT_and_RR_power1,HMT_and_RR_power2}. 

\begin{figure}[htbp]
    \centering
    \begin{subfigure}[t]{0.48\textwidth}
        \centering
        \includegraphics[width=\linewidth]{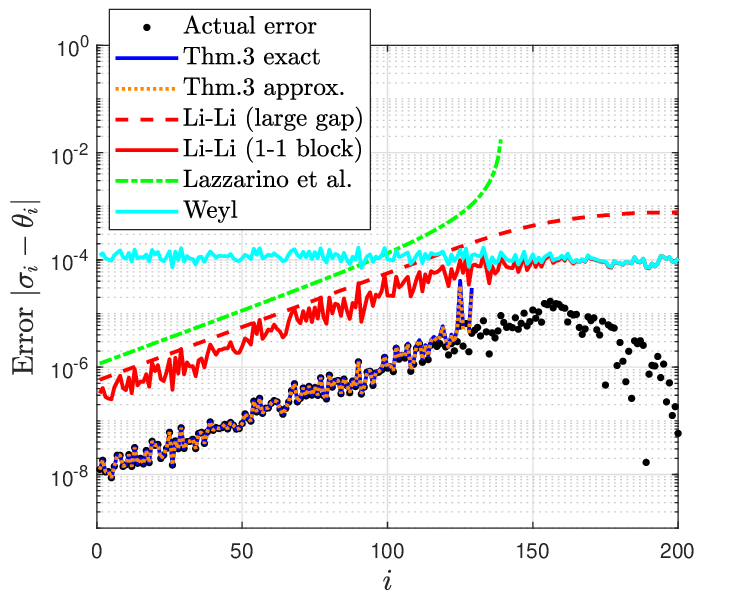}
        \caption{Petrov-Galerkin approximation}
    \end{subfigure}
    \hfill
    \begin{subfigure}[t]{0.48\textwidth}
        \centering
        \includegraphics[width=1\linewidth]{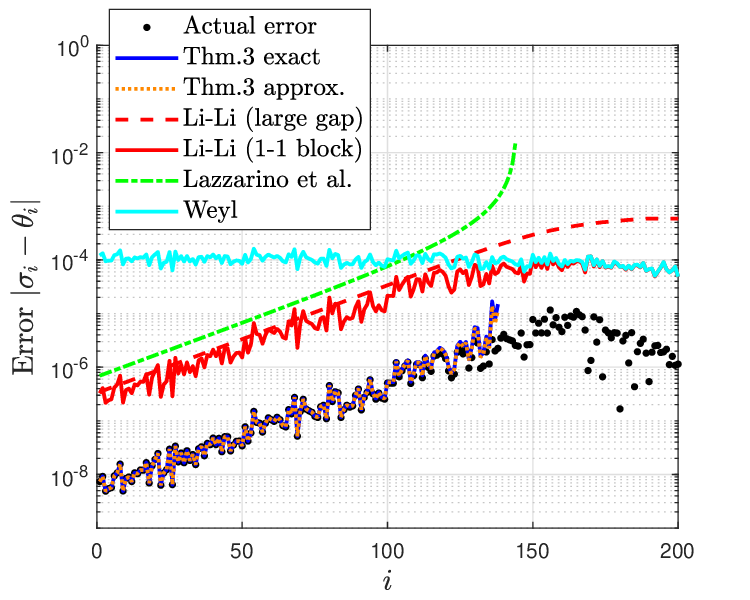}
        \caption{Randomized SVD}
    \end{subfigure}
    \caption{Error $|\sigma_i-\theta_i|$ and bounds for geometrically distributed singular values where the trial subspaces were found with a single power iteration. Left: estimation with Petrov-Galerkin approximation ; right: estimation with randomized SVD. 
    }
    \label{HMT_and_RR_power1}
\end{figure}

\begin{figure}[htbp]
    \centering
    \begin{subfigure}[t]{0.48\textwidth}
        \centering
        \includegraphics[width=\linewidth]{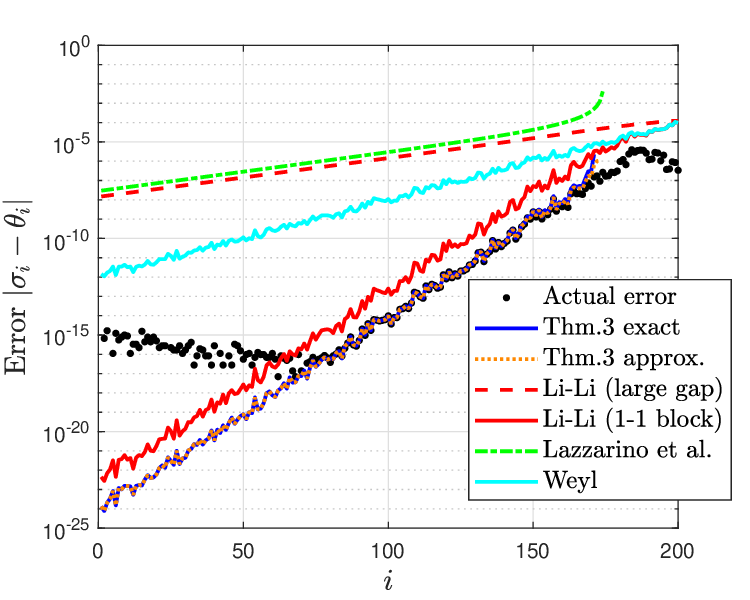}
        \caption{Petrov-Galerkin approximation}
    \end{subfigure}
    \hfill
    \begin{subfigure}[t]{0.48\textwidth}
        \centering
        \includegraphics[width=1\linewidth]{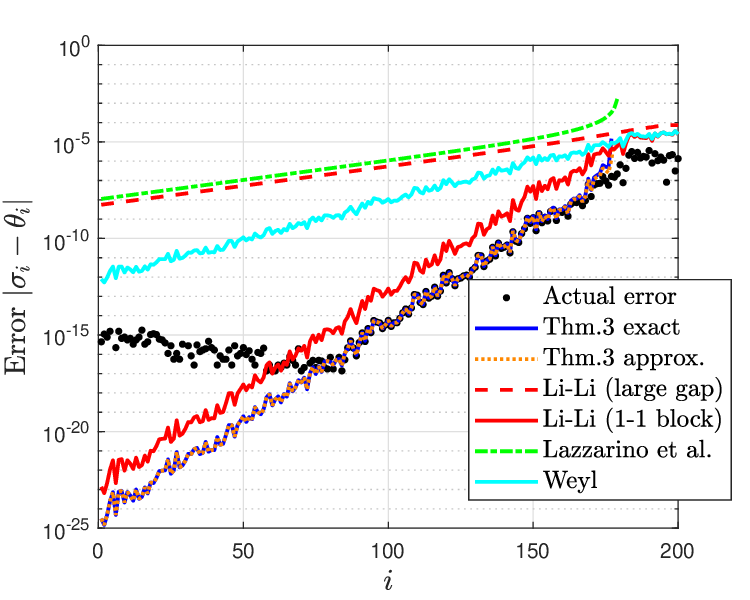}
        \caption{Randomized SVD}
    \end{subfigure}
    \caption{Error $|\sigma_i-\theta_i|$ and bounds for geometrically distributed singular values where the trial subspaces were found with double power iteration. Left: estimation with Petrov-Galerkin approximation ; right: estimation with randomized SVD.
    In some cases the bounds lie below the actual error; this is due to roundoff errors (which our bounds do not account for). Indeed, the effect of round-off errors was not accounted for in the perturbation matrix $\bar A$ (from \eqref{eq:RR_svd_structure}) considered in our analysis. The operations (e.g. orthogonal multiplication) involved in both methods above are backward stable, so in finite precision arithmetic, we can consider that $\sigma_i$ is actually the exact singular value of $\bar A+E_u$ with $\|E_u\|_2=\mathcal{O}(u\|A\|_2)$. 
Therefore, Weyl's theorem 
implies that the contribution of round-off errors in $|\sigma_i-\theta_i|$ is fortunately bounded by $\mathcal{O}(u\|A\|_2)$. That is, even in finite precision arithmetic, the bounds can be trusted up to working precision.
    }
    \label{HMT_and_RR_power2}
\end{figure}

Looking at the results with both single and double power iteration allows us to compare the effect of a graded and ungraded residual structure on the bounds. With single power iteration we observe that the residuals are all around $10^{-4}$. Therefore the bound Li-Li (large gap) is almost as accurate as Li-Li (1-1 block), because $\|E_i\|_2 \approx \|E\|_2, \ \forall i$. In this situation the trend of the bounds is controlled by the spectral gap in the denominator.

With the double power iteration, the residuals have an exponential decay as we move towards the extreme eigenvalues, so naturally the bound Li-Li (1-1 block) and our bounds perform much better than the bounds based on $\|E\|_2$. Once again our bounds are sharper than Li-Li (1-1 block) by a factor of about $100$.

Hence both for single and double power iterations, our bound is the sharpest, to the point where it  
even has the same fluctuations as the actual error as the index $i$ is varied. Moreover the approximation $\mbox{Gap}_i= |\theta_i-\theta_{k}|$ leads to a bound that is very close to the theoretical result from \Cref{thm_svd} and uses only available information, so this bound is computable in practice. 

Also as mentioned for the asymptotic case at the end of \Cref{sec:svd_bd}, the bound \eqref{eq:liliSVDbd} (which is Li-Li (large gap)) is sharper than \eqref{eq:lorenzoSVDbd}. 
However, one must keep in mind that our bounds use stronger assumptions and information: 
while the bounds \eqref{eq:liliSVDbd} and \eqref{eq:lorenzoSVDbd} only use the spectral norm of the full error blocks $E$, $F$, we assume that all the norms of each column ($\|E_i\|_2, \|F_i\|_2$) are known. We repeat that these are indeed typically available or computable.

Finally, we briefly compare the results from HMT and PG. In theory, for the same left trial subspace $Q_1$, HMT gives more accurate singular values than PG, as HMT is based on an orthogonal projection. Experimentally we obtained nearly the same values of the residual norms $\|F_i\|_2$ with both methods (especially for small $i$), so our \Cref{thm_svd} suggests that the error bound for HMT is better than the bound for PG by about a factor 2. This is confirmed in \Cref{fig:pg_vs_hmt}. In all cases our bounds reflect the actual errors, and the PG estimates give accuracy comparable to that of HMT.

\begin{figure}
    \centering
    \includegraphics[width=0.7\linewidth]{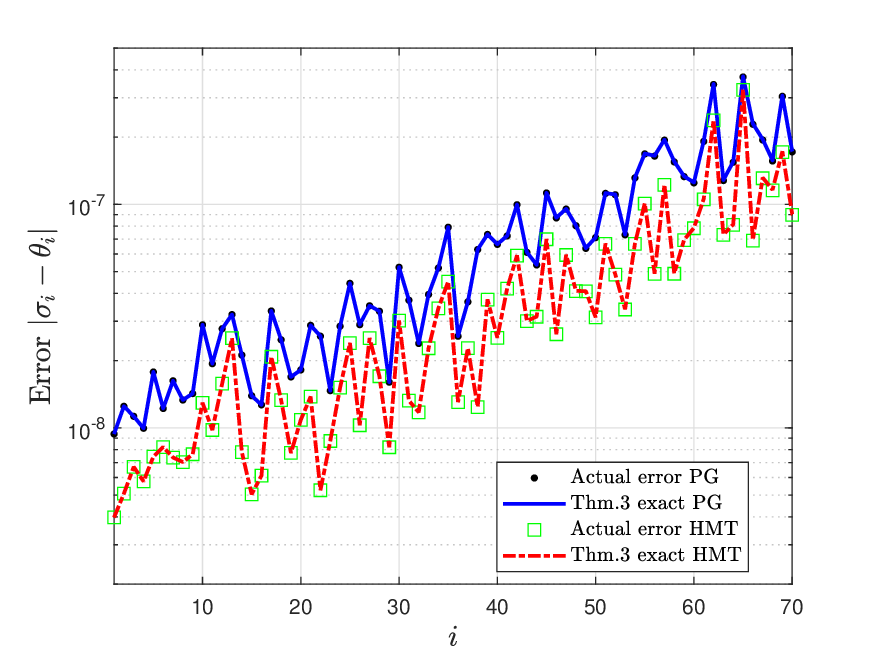}
    \caption{Error $|\theta_i-\sigma_i|$ and theoretical bound \Cref{thm_svd} for the Petrov-Galerkin and randomized SVD methods, computed based on the same left trial subspace $Q_1$ (with $Q_1$ obtained from a single power iteration).}
    \label{fig:pg_vs_hmt}
\end{figure}


\section{Discussion and future directions} 

One might wonder if the graded structure of the residuals could be exploited further in the derivation of our bounds. We discuss this below by looking at two cases.

To analyze the behavior of our bound when the residuals have a graded structure we look closer at the denominator of our bound, especially at the term $s_i:=\sum_{1\leq j\leq k \atop j\neq i} \|E_i\|_2^2/\delta_{i,j}$. We discuss the symmetric case for simplicity but the ideas also apply to the singular value case. The idea is to 
examine which conditions $s_i$ is small, as it corresponds to getting close to the asymptotic case. Take $i$ such that $1<i<k$ so that $\theta_i$ is somewhere in the middle of the approximated eigenvalues. Then in  terms of $s_i$ we consider a $j$ that is far from $i$ and distinguish roughly two cases :
\begin{itemize}
    \item[(i)] if $j> i$, then $|\theta_i-\theta_j|$ is large and $\|E_j\|_2\gg\|E_i\|_2$, therefore the fact that the denominator $|\theta_i-\theta_j|$ is large compensates for the large residual ;
    \item[(ii)] if $j<i$, then $|\theta_i-\theta_j|$ is large and $\|E_j\|_2\ll\|E_i\|_2$, which is the best case as it gives a small contribution to $s_i$. 
\end{itemize} 
Therefore for small $i$ most $j$ are larger so we are in the less favorable situation (i), but then it is compensated by the fact that the numerator $\|E_i\|_2^2$ of the bound is small. Conversely as $i\rightarrow k$ the numerator gets larger but the terms in $s_i$ become smaller so $d_i$ decreases. 

On the other hand, taking the $\theta_i$ to be uniformly distributed and the structure not graded we can assume $\|E_i\|_2\approx \epsilon= constant, \forall i$, which leads to $s_i \approx \epsilon^2\sum_{j\neq i} (|\theta_i-\theta_j|-\epsilon)^{-1}$ being almost independent of $i$. This implies that the bound is virtually only determined by $\mbox{Gap}_i=\min_j |\theta_i-\lambda_j(A_2)|$ and the residual $\|E_i\|_2^2$ (weak dependence on other terms $j\neq i$).

One way of looking at this is that our bounds have enough \textit{parameters} to account for a more structured error matrix than the previous bounds from the literature. Note that it also takes into account the other Ritz values $\theta_j$, although their influence is quite limited, so all the available information is used in the bound.

One might suspect that there is room for improvement in finding a more precise definition of $\delta_i, \delta_{ij}$ in order to make the bound sharper. 
However in practice the gap $\mbox{Gap}_i$ is the dominant term in the denominator of our bound: in our experiments from \Cref{sec:exp_sym} $\mbox{Gap}_i= O(1)$ while $\|E_i\|_2\approx 10^{-5}$ and $s_i\lesssim 10^{-6}$. 
Removing these terms and keeping only $\mbox{Gap}_i$ in the denominator kept our bound valid and did not change them much. 

Hence our theoretical work implies that, if the gap $\mbox{Gap}_i$ is known to be very large compared to the residual norms, one could even use the bound \eqref{eq:goal} with $c=1$ as an approximation to the error. Our result echoes the improved bounds for the accuracy of Ritz vectors that was derived in \cite{Yuji_sharpVect}, as it also improves a classical bound by showing that a bigger gap in the denominator can govern the perturbation of eigenvectors.

We believe that the most interesting extension of this work would be to derive a similar bound that is still applicable when the error matrix also has diagonal terms, or to find a trick that would put the error matrix in the right form in order to apply our bound. This could notably allow us to derive sharp error bounds for the approximate singular values computed from the (Generalized) Nyström method, which is often seen to be more accurate than Petrov-Galerkin and the randomized SVD methods. 
A somewhat related problem is to derive error bounds for eigenvalues computed by the \emph{sketched} Rayleigh-Ritz method~\cite{nakatsukasa_tropp_2024fast}, which is also not based on orthogonal projection. 
Another idea could be finding block-wise bounds inspired by our Thm.\ref{thm:main} that would be adapted to specific eigensolvers. One can notably think of Krylov methods with restarting (see for example the recent bounds from \cite{restart}), which could lead to a more complex structured perturbation matrix, but this idea is out of the scope of this paper. 

\subsection*{Acknowledgment}
We would like to thank the reviewers for their perceptive comments and suggestions, Andrew Knyazev for bringing several papers to our attention, and Ren-Cang Li for a discussion about how the results compare with~\cite{LiLi}.
This project was supported by the EPSRC grants EP/Y010086/1 and EP/Y030990/1. 
For the purpose of open access, the author has applied a CC BY public copyright licence to any author accepted manuscript arising from this submission.

\bibliographystyle{siamplain}
\bibliography{main}

\begin{thebibliography}{10}

\bibitem{Bai2000}
{\sc Z.~Bai, J.~Demmel, J.~Dongarra, A.~Ruhe, and H.~van~der Vorst}, {\em Templates for the Solution of Algebraic Eigenvalue Problems: A Practical Guide}, SIAM, Philadelphia, 2000.

\bibitem{clarkson2009}
{\sc K.~L. Clarkson and D.~P. Woodruff}, {\em Numerical linear algebra in the streaming model}, in Proceedings of the ACM Symposium on Theory of Computing (STOC), 2009, pp.~205--214.

\bibitem{davisKahan}
{\sc C.~Davis and W.~M. Kahan}, {\em The rotation of eigenvectors by a perturbation. {III}}, SIAM J. Numer. Anal., 7 (1970), pp.~1--46.

\bibitem{gu2015subspace}
{\sc M.~Gu}, {\em Subspace iteration randomization and singular value problems}, SIAM J. Sci. Comput., 37 (2015), pp.~A1139--A1173.

\bibitem{HMT}
{\sc N.~Halko, P.~G. Martinsson, and J.~A. Tropp}, {\em Finding structure with randomness: Probabilistic algorithms for constructing approximate matrix decompositions}, SIAM Rev., 53 (2011), pp.~217--288.

\bibitem{Horn2012}
{\sc R.~A. Horn and C.~R. Johnson}, {\em Matrix Analysis}, Cambridge University Press, second, corrected reprint~ed., 2013.

\bibitem{Jia2001}
{\sc Z.~Jia and G.~W. Stewart}, {\em An analysis of the {R}ayleigh-{R}itz method for approximating eigenspaces}, Math. Comput., 70 (2001), pp.~637--647.

\bibitem{LOBPCG_code}
{\sc A.~V. Knyazev}, {\em lobpcg.m at {MATLAB} {C}entral {F}ile {E}xchange}.
\newblock Available at \url{http://www.mathworks.com/matlabcentral/fileexchange/48-lobpcg-m}.
\newblock Accessed: February 2025.

\bibitem{LOBPCG_paper}
{\sc A.~V. Knyazev}, {\em Toward the optimal preconditioned eigensolver: {L}ocally {O}ptimal {B}lock {P}reconditioned {C}onjugate {G}radient method}, SIAM J. Sci. Comp, 23 (2001), pp.~517--541.

\bibitem{Lanczos1950}
{\sc C.~Lanczos}, {\em An iteration method for the solution of the eigenvalue problem of linear differential and integral operators}, J. Res. Natl. Bur. Stand. B, 45 (1950), pp.~255--282.

\bibitem{lazzarino}
{\sc L.~Lazzarino, H.~A. Daas, and Y.~Nakatsukasa}, {\em Matrix perturbation analysis of methods for extracting singular values from approximate singular subspaces}, arXiv preprint arXiv:2409.09187,  (2024), \url{https://arxiv.org/abs/2409.09187}.

\bibitem{LiLi}
{\sc C.-K. Li and R.-C. Li}, {\em A note on eigenvalues of perturbed {H}ermitian matrices}, Linear Algebra Appl., 395 (2005), pp.~183--190.

\bibitem{LiAndYuji}
{\sc R.-C. Li, Y.~Nakatsukasa, N.~Truhar, and W.~Wang}, {\em Perturbation of multiple eigenvalues of {H}ermitian matrices}, Linear Algebra Appl., 437 (2012), pp.~202--213.

\bibitem{Mathias1998}
{\sc R.~Mathias}, {\em Quadratic residual bounds for the {H}ermitian eigenvalue problem}, SIAM J. Matrix Anal. Appl, 19 (1998), pp.~541--550.

\bibitem{Nakatsukasa2012}
{\sc Y.~Nakatsukasa}, {\em Eigenvalue perturbation bounds for {H}ermitian block tridiagonal matrices}, Appl. Numer. Math., 62 (2012), pp.~67--78.

\bibitem{Yuji_sharpVect}
{\sc Y.~Nakatsukasa}, {\em Sharp error bounds for {R}itz vectors and approximate singular vectors}, Math. Comput., 89 (2018), pp.~1843--1866.

\bibitem{GNystrYuji}
{\sc Y.~Nakatsukasa}, {\em Fast and stable randomized low-rank matrix approximation}, arXiv preprint arXiv:2009.11392v1,  (2020), \url{https://arxiv.org/abs/2009.11392v1}.

\bibitem{nakatsukasa_tropp_2024fast}
{\sc Y.~Nakatsukasa and J.~A. Tropp}, {\em Fast and accurate randomized algorithms for linear systems and eigenvalue problems}, SIAM J. Matrix Anal. Appl., 45 (2024), pp.~1183--1214.

\bibitem{Paige1976ErrorAO}
{\sc C.~C. Paige}, {\em Error analysis of the {L}anczos algorithm for tridiagonalizing a symmetric matrix}, IMA J. Appl. Math., 18 (1976), pp.~341--349.

\bibitem{Parlett98}
{\sc B.~N. Parlett}, {\em The Symmetric Eigenvalue Problem}, SIAM, Philadelphia, 1998.

\bibitem{saibaba2019randomized}
{\sc A.~K. Saibaba}, {\em Randomized subspace iteration: Analysis of canonical angles and unitarily invariant norms}, SIAM J. Matrix Anal. Appl., 40 (2019), pp.~23--48.

\bibitem{Stewart1998}
{\sc G.~W. Stewart}, {\em Matrix Algorithms}, vol.~1, SIAM, Philadelphia, 1998.

\bibitem{Stewart90}
{\sc G.~W. Stewart and J.-G. Sun}, {\em Matrix Perturbation Theory}, Computer Science and Scientific Computing, Academic Press, 1990.

\bibitem{woolfe2008}
{\sc F.~Woolfe, E.~Liberty, V.~Rokhlin, and M.~Tygert}, {\em A fast randomized algorithm for the approximation of matrices}, Appl. Comput. Harmon. Anal., 25 (2008), pp.~335--366.

\bibitem{restart}
{\sc M.~Zhou, A.~Knyazev, and K.~Neymeyr}, {\em Angle-free cluster-robust {R}itz value bounds for restarted block eigensolvers}, Numerical Linear Algebra with Applications, 32 (2025), p.~e2607, \url{https://doi.org/https://doi.org/10.1002/nla.2607}.

\bibitem{RQbounds}
{\sc P.~Zhu, M.~E. Argentati, and A.~V. Knyazev}, {\em Bounds for the rayleigh quotient and the spectrum of self-adjoint operators}, SIAM Journal on Matrix Analysis and Applications, 34 (2013), pp.~244--256, \url{https://doi.org/10.1137/120884468}.

\end{thebibliography}
\end{document}